\newtheorem{theorem}{Theorem}[section]
\newtheorem{corollary}[theorem]{Corollary}
\newtheorem{question}[theorem]{Question}
\theoremstyle{definition}
\newcommand{\R}{\mathds{R}}
\newcommand{\rr}{\mathds{R}}
\newcommand{\zz}{\mathds{Z}}
\title{Cookie cutters: Bisections with fixed shapes}
\author{Patrick Schnider \\
        Department of Mathematics and Computer Science,
        University of Basel\\
        Department of Computer Science, ETH Z\"{u}rich, Switzerland \\ {\tt patrick.schnider@inf.ethz.ch}
        \and
        Pablo Sober\'{o}n\thanks{The research of P. Sober\'on was supported by NSF CAREER award no. 2237324 and a PSC-CUNY Trad B award.} \\ Department of Mathematics, Baruch College, City University of New York, USA \\
        Department of Mathematics, The Graduate Center, City University of New York, USA \\ {\tt psoberon@gc.cuny.edu}}
\date{}
\begin{document}

\maketitle

\begin{abstract}
In a mass partition problem, we are interested in finding equitable partitions of smooth measures in $\rr^d$.  In this manuscript, we study the problem of finding simultaneous bisections of measures using scaled copies of a prescribed set $K$.  We distinguish the problem when we are allowed to use scaled and translated copies of $K$ and the problem when we are allowed to use scaled isometric copies of $K$.  These problems have only previously been studied if $K$ is a half-space or a Euclidean ball.  We obtain positive results for simultaneous bisection of any $d+1$ masses for star-shaped compact sets $K$ with non-empty interior, where the conditions on the problem depend on the smoothness of the boundary of $K$.  Additional proofs are included for particular instances of $K$, such as hypercubes and cylinders, answering positively a conjecture of Sober\'on and Takahashi.  The proof methods are topological and involve new Borsuk--Ulam-type theorems.

\end{abstract}

\section{Introduction}

Given finite measures or finite sets of points in a geometric space, finding a fair way to split the space into pieces is a natural goal, often called a mass partition problem \cite{Matousek2003, Kano2021, RoldanPensado2022}.  Fairness corresponds to the pieces having the same size in each measure, or having the same number of points of each set.  The quintessential example is the ham sandwich theorem \cite{Steinhaus1938, Stone:1942hu}, that states that \textit{given $d$ mass distributions in $\rr^d$, there exists a hyperplane that simultaneously halves each of them}.  In this paper, we study mass partition problems in which one of the pieces has a fixed shape.  As with many mass partition results, this instance has a food-related interpretation.

Assume you have a cookie dough with three ingredients, e.g., the dough, chocolate chips and coconut sprinkles. To also have fresh cookies tomorrow, you want to use half of the dough right now. You also want that the half you leave for tomorrow contains half of the dough, half of the chocolate chips, and half of the coconut sprinkles. Unfortunately, the dough is already rolled out, and the only thing you can still do to it without destroying the chocolate chips is scaling the dough. Further, the only cutting device you have at hand is a single cookie cutter. Is it always possible to scale the rolled out dough in such a way, that you can nicely bisect all ingredients with a single cut with the cookie cutter? In this paper, we will show that if the cookie cutter has a nice enough shape the answer to this is `yes', even in higher dimensions.  In some cases, we can even avoid rotating the cookie cutter.

In formal terms, we define a \emph{cookie cutter} in $\R^d$ as a compact subset of $\R^d$ with non-empty interior whose boundary contains at least one point at which it is smooth. We say that a cookie cutter is \emph{smooth} if its boundary is smooth everywhere. We also consider \emph{star-shaped} cookie cutters. Instead of ingredients, we want to bisect \emph{mass distributions}. A mass distribution $\mu$ on $\R^d$ is a Borel measure on $\R^d$ such that $0<\mu(\R^d)<\infty$ and absolutely continuous with respect to the Lebesgue measure. Instead of scaling the dough, we will think of scaling the cookie cutters. More formally, for a cookie cutter $C$, another cookie cutter $C'$ is a \emph{homothetic copy} of $C$ if $C'$ can be obtained by a scaling and translation of $C$.  We include limiting cases of these shapes as valid homothetic copies. 
Similarly, we say that a shape $C'$ is \emph{similar to} a shape $C$ if $C'$ can be obtained from $C$ by scaling, translation, and rotation.

The main goal of this paper is determine which families of measures can be bisected by similar or homothetic copies of a single set $C$.  This problem has only been solved when $C$ is a half-space (which is not a cookie cutter) or a sphere (which is a smooth star-shaped cookie cutter) \cite{Stone:1942hu}.

In Section \ref{sec:smooth_cookies}, we prove the first main result of this paper:

\begin{restatable}{theorem}{smoothcookies}\label{thm:smooth_cookies}
Let $\mu_0,\ldots,\mu_{d}$ be $d+1$ mass distributions on $\R^d$ and let $C$ be a smooth cookie cutter. Then there exists a homothetic copy $C'$ of $C$ such that $\mu_i(C')=\frac{1}{2}\mu_i(\R^d)$ for all $i\in\{0,\ldots,d\}$.
\end{restatable}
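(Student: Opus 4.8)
The plan is to set up a map from a compact parameter space of homothetic copies of $C$ to $\R^{d+1}$, recording the signed discrepancy $\mu_i(C') - \tfrac12\mu_i(\R^d)$ for $i=0,\ldots,d$, and then to invoke a Borsuk--Ulam-type theorem to find a common zero. The natural parameter space is the set of homothetic copies of $C$, which is parametrized by a translation vector $x \in \R^d$ together with a scaling factor $t$; allowing $t$ to be negative (so that the copy is reflected through $x$) or zero (a degenerate point copy), and compactifying, one is led to consider $t$ ranging over a closed interval and then gluing the two endpoints $t = \pm\infty$ (huge copies covering almost all mass, where the map value is close to $+\tfrac12(\mu_0,\ldots,\mu_d)$ independently of $x$, hence a single point) together with the $t=0$ slice (tiny copies, map value close to $-\tfrac12(\mu_0,\ldots,\mu_d)$, again a single point). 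Carrying out this compactification carefully should identify the parameter space with a sphere $S^{d+1}$, and the two gluings should produce a $\zz_2$-action under which the discrepancy map is odd: replacing $t \mapsto -t$ reflects the cookie cutter through its center $x$, which does not obviously negate the mass, so the key geometric point is to choose the antipodal action on the sphere so that it corresponds to the symmetry $C' \leftrightarrow$ (complement-type copy) that swaps ``mass inside'' with ``mass outside.''

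Concretely, I expect the right model is: parametrize by $(x, t)$ with $x \in \R^d$ and $t \in \R$, let the homothetic copy for $t>0$ be $x + tC$ and for $t<0$ be $x + t C$ interpreted as the \emph{closure of the complement} of a shrinking copy, or equivalently track $\mu_i(x+tC) - \tfrac12\mu_i(\R^d)$ for $t>0$ and its negative for $t<0$; smoothness of $\partial C$ is what guarantees this extended map is continuous across $t=0$ and across $t=\pm\infty$ (a non-smooth boundary point could create a jump in how mass accrues). After compactifying, the parameter space is $S^{d+1}$ with the antipodal map, and we have a continuous $\zz_2$-equivariant map $S^{d+1} \to \R^{d+1}$; the Borsuk--Ulam theorem gives a point mapping to $0$, i.e.\ a homothetic copy $C'$ with $\mu_i(C') = \tfrac12\mu_i(\R^d)$ for all $i$. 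One must check the solution is a genuine (non-degenerate) homothetic copy, but the limiting ``all'' or ``nothing'' copies have discrepancy $\pm\tfrac12\mu_i(\R^d) \ne 0$, so any zero lies in the interior of the parameter interval.

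The main obstacle is constructing the compactification so that (i) the discrepancy map extends \emph{continuously} to the added boundary strata, and (ii) the resulting identification is genuinely a sphere carrying a \emph{free} $\zz_2$-action on which the map is odd. Step (i) is precisely where smoothness of the cookie cutter is used: one needs that as $t \to 0^+$ the measures $\mu_i(x+tC)$ tend to $0$ uniformly in $x$ (immediate from absolute continuity and boundedness of $C$) and, more delicately, that gluing the $t<0$ branch (complement copies) to the $t>0$ branch across $t=0$ is continuous — which requires controlling $\mu_i$ of sets near a scaled boundary, and a cusp or corner in $\partial C$ would allow the two branches to disagree. Step (ii) — verifying the antipodality and freeness of the action, so that the classical Borsuk--Ulam theorem (or the new Borsuk--Ulam-type theorem advertised in the abstract) applies — is the other point needing care; I would expect the paper to package (ii) as a standalone topological lemma and then apply it here as a black box. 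The absolute continuity hypothesis on the $\mu_i$ handles all the measure-theoretic continuity of $x,t \mapsto \mu_i(x+tC)$ on the interior, so no heavy analysis is needed there.
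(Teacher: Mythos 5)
Your high-level outline — compactify a parameter space of homothetic copies, map each copy to its vector of signed discrepancies, and invoke a Borsuk--Ulam-type statement — is indeed the right strategy, and you correctly guess that the paper encapsulates the topological input as a standalone lemma. The gap lies precisely in the two points you flag as needing care, and they do not resolve as you hope. Your proposed $\zz_2$-action is $t\mapsto -t$, and you acknowledge this does not negate the discrepancy; but there is no involution on genuine finite-scale homothetic copies (reflection through the center, or anything else) that exchanges mass inside with mass outside, so no honest redefinition of the $t<0$ branch fixes this. Declaring the $t<0$ value to be the \emph{negative} of the $|t|>0$ discrepancy is not evaluating the same map on a different cookie cutter; it is a sign flip, and the two one-sided limits as $t\to 0^{\pm}$ are then $\mp\frac12\big(\mu_0(\R^d),\ldots,\mu_d(\R^d)\big)$, which disagree. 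Collapsing the $t=0$ slice to a point therefore makes the map \emph{discontinuous} at that point, and no smoothness of $\partial C$ repairs this (your attribution of the role of smoothness to the $t=0$ transition is misplaced). The stated identifications (collapse $t=0$; collapse $t=\pm\infty$; deal with $|x|\to\infty$) also do not visibly produce a sphere, and the strata being collapsed would be fixed under your $\zz_2$-action, so even if a sphere emerged the action would not be free.

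The paper sidesteps the need for global equivariance. In the star-shaped case it uses the ball $B^d$ and the version of Borsuk--Ulam requiring antipodality \emph{only on the boundary} $S^{d-1}$. Inside $(1/2)B^d$ the parameter $v$ encodes the star-center $c(v)$ (which runs off to infinity as $\|v\|\to 1/2$), and the scaling is not a free parameter: it is fixed by requiring the copy to bisect $\mu_0$. This drops one target coordinate and makes the domain $d$-dimensional. On $\partial B^d$ the copies are degenerate — half-spaces — and complementation of a half-space \emph{is} a natural involution, which is where antipodality comes from. The key construction you are missing is the annulus $1/2\le\|v\|\le 1$: the half-space at $\|v\|=1/2$ has normal $n(u)$, the inward tangent normal of $\partial C$ at the boundary point in direction $u$ (so it matches the Hausdorff limit of the blown-up homothetic copies), while at $\|v\|=1$ the normal is $-u$ (so antipodal points give complementary half-spaces). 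The paper interpolates the normal between these across the annulus; smoothness of $\partial C$ is used exactly here, to make $n(u)$ well-defined, continuous, and never equal to $u$, so the interpolation never degenerates. In the full (non-star-shaped, general smooth) proof via Theorem~\ref{thm:function_bu}, the scaling does become an honest extra parameter, but the $\zz_2$-action acts on the spherical factor $M$ \emph{and} on $t$ simultaneously, not on $t$ alone, and the Borsuk--Ulam obstruction lives on the zero set of the $\mu_0$-constraint rather than on full antipodality of a $(d+1)$-sphere.
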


Note that if the boundary of $C$ is not smooth everywhere, then the analogous result does not hold: consider three point-like masses on the line $x=y$ in $\R^2$ and let $C$ be an axis-parallel square. No axis-parallel square can have all three points on its boundary. However, allowing rotations, a solution exists.  The result for $d$ mass distributions follows directly from the ham sandwich theorem \cite{Steinhaus1938, Stone:1942hu}, as we can obtain any half-space as the limiting shape of homothetic copies of $C$.

In a first step towards a more general result when allowing rotations in \cref{sec:hypercubes} we prove a similar result for hypercubes:

\begin{restatable}{theorem}{hypercubes}\label{thm:hypercubes}
 Let $\mu_0,\ldots,\mu_{d}$ be $d+1$ mass distributions in $\R^d$. Then there exists a hypercube $C$ such that $\mu_i(C)=\frac{1}{2}\mu_i(\R^d)$ for all $i\in\{0,\ldots,d\}$.
\end{restatable}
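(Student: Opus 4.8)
The plan is to set up a configuration space whose points parametrize candidate hypercubes, build a test map into a sphere whose coordinates record the ``discrepancies'' $\mu_i(C)-\tfrac12\mu_i(\R^d)$, and then invoke a Borsuk--Ulam-type theorem to force a common zero. A hypercube in $\R^d$ is determined by its center $c\in\R^d$, its side length $r\ge 0$, and a rotation $R\in SO(d)$; since we allow the degenerate case $r=\infty$ (a half-space) as a limiting shape, the natural compactification of the $(c,r)$ data is a sphere $S^d$, exactly as in the half-space/ham-sandwich picture, with antipodal points on the equator corresponding to complementary half-spaces. The subtle new ingredient compared to Theorem 1.1 is the rotation: we do not want to use the full $SO(d)$, because a hypercube has the symmetry group of the cube acting on it, so the effective parameter space of \emph{oriented axis-frames} is the flag-type space $SO(d)/(\text{hyperoctahedral group})$; I would instead work on $SO(d)$ itself (or a suitable cover) and make the test map equivariant under the cube symmetries, so that a zero of the map yields an honest hypercube regardless of which representative of the frame we picked.

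The key steps, in order: (1) Define the configuration space $X$ as an appropriate bundle over $SO(d)$ (or over the space of orthonormal frames) with fiber the sphere $S^d$ compactifying the center-and-size data, so that $X$ carries a free action of $\zz_2$ coming from the equatorial antipode (complementing a half-space) together with the action of the cube's symmetry group; check that for a degenerate (half-space) configuration the resulting hyperplane already bisects by the ham sandwich theorem, so these boundary configurations are handled. (2) Define $f\colon X\to \R^{d+1}$ by $f(C)=(\mu_0(C)-\tfrac12\mu_0(\R^d),\ldots,\mu_d(C)-\tfrac12\mu_d(\R^d))$, verify continuity (using absolute continuity of the $\mu_i$ with respect to Lebesgue measure, so that $\mu_i(C)$ varies continuously even as $r\to\infty$ and as $C$ degenerates), and verify the antipodality/equivariance: swapping a half-space for its complement negates every coordinate, i.e. $f$ is odd with respect to the $\zz_2$-action. (3) Observe that landing $f$ in $\R^{d+1}$ while $X$ has the ``size'' of an $S^d$-bundle over something is precisely the numerology of a Borsuk--Ulam statement, and apply the new Borsuk--Ulam-type theorem developed for Theorem 1.1 (or an equivariant refinement of it that also respects the cube symmetries) to conclude $f$ has a zero; translate that zero back into a genuine hypercube $C$ with $\mu_i(C)=\tfrac12\mu_i(\R^d)$ for all $i$.

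\textbf{Main obstacle.} The hard part will be step (1)--(3) together: getting the topology of the configuration space right so that the Borsuk--Ulam-type theorem actually applies. Adding rotations enlarges the space, but $SO(d)$ is not a sphere and the cube's symmetry group acts on the frame, so one cannot simply reuse the $S^d \to S^d$ argument behind Theorem 1.1; one must either (a) fix a clever family of ``$d+1$-dimensional sphere's worth'' of hypercubes inside the full rotation-and-scaling space on which the relevant equivariant topological obstruction (a nonzero mod-2 or integral degree / Euler class) is still nonzero, or (b) prove a genuinely new Borsuk--Ulam theorem for maps out of the total space of this $S^d$-bundle over $SO(d)$ that are odd under the equatorial $\zz_2$ and equivariant under the hyperoctahedral group, and check that no such map can avoid the origin of $\R^{d+1}$. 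Care is also needed at the degenerate strata (hypercubes of side length $0$, which collapse to points, and of side length $\infty$, which become half-spaces) to ensure $f$ extends continuously there and that the symmetry/antipodality hypotheses of the Borsuk--Ulam theorem are not violated on the boundary.
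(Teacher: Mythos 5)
Your high-level strategy (configuration space of candidate hypercubes, test map recording discrepancies, Borsuk--Ulam-type conclusion) is the right framework, and you honestly flag that the key technical step is "getting the topology of the configuration space right." Unfortunately, that flagged step is exactly where the proof lives, and the route you sketch for filling it in is not the one the paper takes and contains a concrete wrong turn.

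The gap: you propose an $S^d$-bundle over $SO(d)$ with hyperoctahedral equivariance and a test map into $\R^{d+1}$. The paper does something structurally different. First, it reduces the target dimension by using one measure $\mu_0$ to fix the scaling of the hypercube (or, in the degenerate regime, the translation of the half-space); this leaves a test map into $\R^d$, not $\R^{d+1}$. Second, and more importantly, the configuration space is not a bundle over $SO(d)$ but the space $N_{d+1}$ of \emph{north-facing} orthonormal $(d+1)$-frames $(v_0,\dots,v_d)$ in $\R^{d+1}$: those for which the north pole lies in $\operatorname{span}(v_0,v_d)$. After centrally projecting the northern hemisphere of $S^d$ onto $\R^d$, the vector $v_0$ encodes the center (or a direction at infinity for the half-space degeneration), while $v_1,\dots,v_{d-1}$ together with a vector determined by $\operatorname{span}(v_0,v_d)$ encode the axis directions. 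This single manifold packages the center, the compactification, and the rotation simultaneously, and it carries only a $\zz_2$-action (flipping $v_0$), not the full hyperoctahedral group. Third, the Borsuk--Ulam theorem needed is not the one behind Theorem~\ref{thm:smooth_cookies} (that one is the ordinary Borsuk--Ulam theorem on $B^d$): it is a new statement, Theorem~\ref{Thm:NorthFace} (every $\zz_2$-map $N_{d+1}\to\R^d$ has a zero), proved by extending any such map to a $(\zz_2)^{d+1}$-equivariant map on the full Stiefel manifold $V_{d+1,d+1}$ and invoking the theorem of Chan--Chen--Frick--Hull. Without identifying the space $N_{d+1}$, the reduction to $\R^d$, and the Stiefel-manifold Borsuk--Ulam input, the plan cannot be completed as stated; in particular, trying to make the map equivariant under the cube's symmetry group is a detour the paper avoids entirely.
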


In the plane, this result states that any three mass distributions can be simultaneously bisected by a square, confirming a conjecture of Sober\'on and Takahashi \cite{Soberon2023}.  This was known for two measures if we include the additional condition that the square must be axis-parallel \cite{Uno:2009wk, Karasev:2016cn}. We give an alternative, simpler proof of \cref{thm:hypercubes} in the plane in \cref{sec:squares}, which generalizes to cylinders in higher dimensions.  This approach also gives a new proof of the Sober\'on--Takahashi theorem on equipartitions using pairs of parallel hyperplanes \cite{Soberon2023}.

Finally, in \cref{sec:non_smooth_cookies} we prove the second main result of the paper, which removes the assumption of smooth boundaries, but requires rotation and reflection of the cookie cutters:

\begin{restatable}{theorem}{nonsmoothcookies}\label{thm:non_smooth_cookies}
Let $\mu_0,\ldots,\mu_{d}$ be $d+1$ mass distributions on $\R^d$ and let $C$ be a (not necessarily smooth) cookie cutter. Then there exists a set $C'$ that is a similar copy of $C$ or a reflection of a similar copy of $C$ such that $\mu_i(C')=\frac{1}{2}\mu_i(\R^d)$ for all $i\in\{0,\ldots,d\}$.
\end{restatable}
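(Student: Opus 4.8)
The plan is to build a configuration space of pointed copies of $C$ and a test map into a sphere, then apply a Borsuk--Ulam-type theorem. Let me think about the natural parameter space. A cookie cutter $C$ has at least one boundary point $p_0$ at which $\partial C$ is smooth; fix such a point. To place a "similar or reflected similar" copy of $C$ in $\R^d$ with a distinguished boundary point sitting at a prescribed location, I would parametrize by: the location of that distinguished point (a point in $\R^d$), the rotation/reflection applied (an element of $O(d)$), and the scaling factor (a positive real, compactified to include the limit $0$, which degenerates $C'$ to a point or — depending on how the scaling is centered — to a half-space). The key observation, exactly as in the smooth case, is that near the smooth boundary point $p_0$ the boundary of a large homothet looks locally like a hyperplane, so letting the scale go to infinity while keeping $p_0$ fixed should recover a half-space through $p_0$ as a limiting "copy." This lets the ham-sandwich theorem serve as the boundary/degenerate case of the topological argument.

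The main steps I would carry out, in order. First, set up the configuration space $X$ of pointed copies (reflections allowed): roughly $X = \R^d \times O(d) \times [0,\infty]$ modulo the identifications coming from degenerate scales, and identify its relevant topology — crucially the behavior as the scale tends to $\infty$ (half-spaces) and to $0$ (points). Second, define the test map $f\colon X \to \R^{d+1}$ by $f(C') = \bigl(\mu_0(C') - \tfrac12\mu_0(\R^d), \ldots, \mu_d(C') - \tfrac12\mu_d(\R^d)\bigr)$; the zeros of $f$ are exactly the desired bisecting copies. Third — this is the heart of the argument — exhibit a free (or suitably equivariant) action of some group, most naturally $\zz_2$ realized by "swapping $C'$ with its complement-type partner" or by composing with the reflection/antipodal structure on the $O(d)$ factor and the half-space limit, such that $f$ becomes an equivariant map to $S^d$ with the antipodal action; then a Borsuk--Ulam-type theorem forces a zero. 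The cleanest incarnation: the degenerate half-space stratum of $X$ carries the usual antipodal $S^{d-1}$ (directions of half-spaces), on which $f$ restricts to the ham-sandwich test map, which is already known to be antipodal and hence essential; one then argues that $f$ cannot be nowhere-zero on all of $X$ because that would extend the essential map on the boundary sphere to a contractible (or otherwise trivial) piece, a contradiction.

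The technical obstacle, and where I expect the real work to be, is getting the topology of $X$ and the group action exactly right so that a Borsuk--Ulam statement applies. Two difficulties stand out. (i) The factor $O(d)$ is disconnected (two components, rotations and reflections), and we genuinely need both components — this is why reflections appear in the statement — so the relevant $\zz_2$-symmetry is not the naive one on a sphere but something adapted to the two components and the scale limits; I would need a new Borsuk--Ulam-type theorem tailored to this space, presumably the one advertised in the abstract. (ii) Measures are only absolutely continuous, not smooth, so $f$ is continuous but a priori only that; continuity of $\mu_i(C')$ in all parameters (including rotations and the half-space limit) must be checked, using dominated convergence and the fact that $\partial C$ has measure zero for each $\mu_i$ — since $\mu_i \ll$ Lebesgue and $\partial C$, being the boundary of a set with nonempty interior that is "not too wild," has Lebesgue measure zero (this needs a mild argument, e.g. $C$ star-shaped would give it, but for a general cookie cutter one may need to restrict to the scale where the relevant boundary contribution vanishes, or argue via the smooth point). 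Handling the $\infty$-limit carefully — showing $\mu_i(C')$ converges to the half-space value and that this gluing is continuous — is the place where I would be most careful, and it is exactly the bridge that lets the ham-sandwich theorem close the argument.
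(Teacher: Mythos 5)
Your proposal correctly identifies the overall strategy (configuration space of similar copies including half-space degenerations, a measure-difference test map, and a Borsuk--Ulam argument) and correctly flags the two places where work is needed: the equivariant topology of the parameter space and the continuity of the test map. But those flags are left unresolved, and they are not minor. Concretely, there are three genuine gaps.

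First, the parameter space. You propose something like $\R^d\times O(d)\times[0,\infty]$, but that has far too many degrees of freedom and no obvious $\zz_2$-action matching the half-space antipodality. The paper instead parametrizes by the manifold $N_{d+1}$ of \emph{north-facing} orthonormal $(d+1)$-frames $(v_0,\dots,v_d)$ in $\R^{d+1}$: $v_0$ encodes the position (via central projection onto $\R^d$, with the equator giving points at infinity), the remaining vectors encode the orientation, and the constraint that the north pole lies in $\mathrm{span}(v_0,v_d)$ is exactly what ties $v_d$ to the direction of a distinguished smooth boundary point of $C$. The paper proves a tailor-made Borsuk--Ulam theorem (its Theorem 4.2): every $\zz_2$-equivariant map $N_{d+1}\to\R^d$ has a zero, where the action negates $v_0$. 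This is the missing theorem you say you would ``presumably'' need; without it there is no argument.

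Second, the boundary gluing is not automatic. Because $C$ is non-smooth, blowing up $C$ from its (star) center in different directions does \emph{not} give antipodal half-spaces; you only get a hyperplane limit along the ray through the chosen smooth boundary point $p$. The paper therefore divides the sphere into annular regions around the pole and continuously \emph{rotates} the cookie cutter as $v_0$ moves outward, so that by the time $v_0$ reaches the equatorial band the degenerate copies are honest half-spaces and the equatorial map is antipodal. This rotation is the central construction of the proof and does not appear in your sketch; ``looks locally like a hyperplane near $p_0$'' is not enough, because you must also arrange that the half-space obtained from $(v_0,\dots)$ is complementary to the one from $(-v_0,\dots)$.

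Third, your phrase ``the relevant boundary contribution vanishes'' hides a real issue for non-star-shaped $C$: there may be no well-defined scaling that bisects $\mu_0$ (the mass inside need not be monotone in the scale, so the bisecting scale is not unique and does not vary continuously). The paper handles this with a separate lemma (its Theorem 6.1): if the bisecting scales form the zero set $Z$ of a continuous antipodal function on $M\times[-1,1]$, then any equivariant map $Z\to\R^d$ still has a zero whenever equivariant maps $M\to\R^d$ do. Without something like this, the map ``pick the bisecting scale'' that your outline implicitly relies on is not continuous, and the whole construction collapses for a general cookie cutter. In short, the direction is right, but the three pieces that make the proof actually work --- the north-facing-frame manifold and its Borsuk--Ulam theorem, the explicit rotation near the equator, and the zero-set lemma replacing monotonicity in the scale --- are all absent.
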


As mentioned at the start of the introduction, many results about mass partitions related to this paper have been illustrated using food, such as cutting sets with few hyperplanes (pizza cuttings) \cite{Barba2019,Blagojevic2022,Hubard2020,Hubard2024,Schnider2021}, cutting sets into few pieces and then distributing them among players (cake cuttings) \cite{Steinhaus49} and cutting mass assignments on affine subspaces (fairy bread cutting) \cite{AxelrodFreed2022,Blagojevic2023,Camarena2024,Schnider:2020kk}.  Results related to our main theorems include studying which fractions of $d$ or $d+1$ mass distributions on $\rr^d$ can be cut simultaneously using a single convex set \cite{Aichholzer:2018gu,Akopyan:2013jt,Blagojevic:2007ij}, and the existence of simultaneous bisections of $d+1$ mass distributions on $\rr^d$ with wedges and cones \cite{Barany:2002tk,Schnider2019,Soberon2023}.

The proof methods are based on equivariant algebraic topology.  The tools we require are simple homotopy arguments and direct applications of Borsuk--Ulam type theorems that have elementary proofs (in a few instances, the Borsuk--Ulam theorem itself). For ease of presentation we first present all our proofs only for star-shaped cookie cutters. In Section \ref{sec:non_star} we then present the additional arguments required to adapt the proofs to the general statements.

\section{Bisections with cookie cutters}\label{sec:smooth_cookies}

Before proving Theorem \ref{thm:smooth_cookies} for star-shaped cookie cutters let us give a brief overview of the ideas. A homothetic copy of a star-shaped cookie cutter is uniquely defined by the location of the star point and a scaling factor. Given a mass distribution $\mu$, for every location of the star point there is (essentially) a unique scaling factor for which the resulting cookie cutter bisects $\mu$. Taking the difference of the mass outside and inside of the cookie cutter for other masses we get a function whose zeros correspond to simultaneous bisections. In our proofs we will show that we can extend this function to an antipodal map from the sphere $S^{d}$ to $\R^d$ without adding any zeros that do not correspond to simultaneous bisections. The result then follows from the Borsuk--Ulam theorem.

\begin{theorem}\label{thm:smooth_star}
Let $\mu_0,\ldots,\mu_{d}$ be $d+1$ mass distributions on $\R^d$ and let $C$ be a smooth star-shaped cookie cutter. Then there exists a homothetic copy $C'$ of $C$ such that $\mu_i(C')=\frac{1}{2}\mu_i(\R^d)$ for all $i\in\{0,\ldots,d\}$.
\end{theorem}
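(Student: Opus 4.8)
The strategy is to use one measure, say $\mu_0$, to cut the candidate cutters down to a family that is naturally a $d$-sphere, and then to apply the Borsuk--Ulam theorem to an odd map built from the remaining measures $\mu_1,\dots,\mu_d$.

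\emph{Setup.} Translate $C$ so that it is star-shaped with respect to the origin, which we may take to lie in $\operatorname{int}(C)$ since $C$ has nonempty interior. For $p\in\R^d$ and $t\ge 0$ put $C_{p,t}:=p+tC$ for the homothetic copies of $C$. As $0\in\operatorname{int}(C)$, the family $\{C_{p,t}\}_{t\ge0}$ is nested and increases from the point $\{p\}$ up to $\R^d$, so $t\mapsto\mu_0(C_{p,t})$ is continuous and nondecreasing, and there is $t=\tau(p)>0$ with $\mu_0(C_{p,\tau(p)})=\tfrac12\mu_0(\R^d)$. To make the region $C_p:=C_{p,\tau(p)}$ single-valued and continuous in $p$, I would first prove the theorem assuming all densities are strictly positive, in which case $\tau(p)$ is unique and, by strict monotonicity, continuous in $p$; the general case then follows by a standard approximation, using that the conclusion is a closed condition, that the parameter space constructed below is compact, and that each $\mu_i$ is null on the (smooth, hence Lebesgue-null) boundary of any limiting region.

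\emph{The odd map and the parameter space.} For measurable $A\subseteq\R^d$ with $\mu_0(A)=\tfrac12\mu_0(\R^d)$ set
\[
  F(A):=\bigl(\mu_1(\R^d)-2\mu_1(A),\ \dots,\ \mu_d(\R^d)-2\mu_d(A)\bigr)\in\R^d,
\]
so that $F(A)=0$ iff $A$ bisects every $\mu_i$, and $F(\overline{\R^d\setminus A})=-F(A)$. The candidate cutters are the homothetic copies $C_p$, their complements, and the half-spaces bisecting $\mu_0$. The geometric heart of the argument — and the only place smoothness of $\partial C$ is used — is that, as $p\to\infty$ in $\R^d$, one has $\tau(p)\to\infty$ and $C_p$ converges to a $\mu_0$-bisecting half-space: writing $C_p=\tau(p)(C-u_p)$ with $u_p:=-p/\tau(p)$, the constraint $\mu_0(C_p)=\tfrac12\mu_0(\R^d)$ keeps $u_p$ bounded and prevents it from accumulating in $\operatorname{int}(C)$ (then $C_p$ would exhaust $\R^d$) or off $C$ (then $C_p$ would be pushed to infinity), so $u_p$ accumulates on $\partial C$, and blowing up a smooth boundary point yields its tangent half-space; conversely, every $\mu_0$-bisecting half-space occurs as such a limit, since its outer normal is attained by the Gauss map of $\partial C$ (at the point of $C$ maximizing the corresponding linear functional) and the offset is pinned by the $\mu_0$-bisection condition. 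Hence the space of candidate cutters is homeomorphic to $S^d$: the two open hemispheres are the copy of $\R^d$ parametrizing $\{C_p\}$ and the copy parametrizing their complements, the equator $S^{d-1}$ is the family of $\mu_0$-bisecting half-spaces appearing as common limits, and under this homeomorphism the complementation $A\mapsto\overline{\R^d\setminus A}$ is the antipodal map. So $F$ extends to a continuous odd map $S^d\to\R^d$.

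\emph{Conclusion and main obstacle.} By the Borsuk--Ulam theorem $F$ vanishes somewhere on $S^d$, and the corresponding cutter is a homothetic copy $C_p$, the complement of one (in which case $C_p$ itself bisects every $\mu_i$, since $\mu_i(\partial C_p)=0$), or a $\mu_0$-bisecting half-space — in all cases a (possibly limiting) homothetic copy of $C$ that simultaneously bisects $\mu_0,\dots,\mu_d$. I expect the main obstacle to be the topological bookkeeping of this compactification: checking that the space of candidate cutters really is $S^d$ with complementation as the antipodal map, and, inseparably, that $F$ is continuous across the equatorial sphere of half-spaces at infinity. This is exactly the step that breaks when $\partial C$ is not smooth — limits of homothetic copies need no longer be half-spaces — consistent with the fact that the conclusion itself fails for general $C$. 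The reduction to strictly positive densities is routine but also needs a little care with the mode of convergence.
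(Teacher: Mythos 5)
Your overall strategy is the same as the paper's: use $\mu_0$ to pin the scaling for each position, so the candidate cutters form a $d$-parameter family (positions in $\R^d$ plus limiting half-spaces), and then apply the Borsuk--Ulam theorem to an odd map built from $\mu_1,\dots,\mu_d$. However, the compactification step you flag as ``the main obstacle'' is exactly where your argument has a genuine gap, and it is exactly the step the paper spends most of its proof addressing.

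The gap is in the claim that the space of candidates is $S^d$ with complementation as the antipodal map, or equivalently that $p\mapsto C_p$ on one hemisphere and $p\mapsto\overline{\R^d\setminus C_p}$ on the other glue continuously along an equator of $\mu_0$-bisecting half-spaces. In your notation, as $p\to\infty$ in direction $u$, the point $u_p=-p/\tau(p)$ accumulates at the boundary point of $C$ on the ray from the star point in direction $-u$, call it $b(-u)$, and $C_p$ converges to the tangent half-space of $C$ at $b(-u)$, whose outer normal is $\nu(b(-u))$. For your gluing to be continuous and for complementation to act antipodally, the half-spaces obtained by sending $p\to\infty$ in directions $u$ and $-u$ would have to be complementary, i.e.\ $\nu(b(-u))=-\nu(b(u))$ for all $u$. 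This fails whenever $C$ is not centrally symmetric about the star point. Concretely, let $C$ be the unit disk in $\R^2$ centered at $(-1/2,0)$ with the origin as star point: the boundary point in direction $(0,1)$ is $(0,\sqrt3/2)$ with outward normal $(1/2,\sqrt3/2)$, while the boundary point in direction $(0,-1)$ is $(0,-\sqrt3/2)$ with outward normal $(1/2,-\sqrt3/2)$, and these are not antipodal. So the naive odd map $F$ on $S^d$ that you describe is in fact discontinuous at the equator, and the claimed homeomorphism does not exist. You correctly identify this as the crux but do not resolve it; the sentence ``Hence the space of candidate cutters is homeomorphic to $S^d$\dots'' is where the proof breaks.

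The paper's fix is to insert an interpolation layer. It parametrizes only the copies $\{C_p\}$ (not their complements) by the ball $B^d$: the open ball $\tfrac12 B^d$ carries the genuine homothetic copies, and on the annulus $\tfrac12\le\|v\|\le1$ the set $C(v)$ is a $\mu_0$-bisecting half-space whose inward normal $n(v)$ is interpolated continuously from the tangent-half-space normal $n(u)$ at $\|v\|=\tfrac12$ to the radial direction $-u$ at $\|v\|=1$ (possible because $n(u)\ne u$, so the linear interpolation never vanishes). At $\|v\|=1$ the half-spaces at $u$ and $-u$ are then complementary, so the resulting $f:B^d\to\R^d$ is antipodal on $\partial B^d$ and the ball version of Borsuk--Ulam applies. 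Your proposal needs a comparable device to reorient the half-spaces at infinity; without it the odd continuous map on $S^d$ you invoke does not exist. (Your density-approximation handling of the possibly non-unique scaling is fine; the paper instead takes the midpoint of the interval of admissible scalings, which gives a continuous selection directly.)
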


For convenience, we will use the following form of the Borsuk--Ulam theorem.

\begin{theorem}
    Let $B^d$ be the unit ball of dimension $d$.  Every continuous map $f:B^d \to \rr^d$ that is antipodal on the boundary of $B^d$ must have zero.
\end{theorem}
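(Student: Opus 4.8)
The plan is to deduce this form of the theorem from the classical Borsuk--Ulam theorem in its antipodal formulation, which I take as known: \emph{every continuous map $F\colon S^d\to\rr^d$ with $F(-x)=-F(x)$ has a zero}. The only work is a routine hemisphere gluing that converts a map on the ball into an odd map on the sphere.

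First I would fix a concrete identification of $B^d$ with the closed upper hemisphere $H^+=\{x\in S^d : x_{d+1}\ge 0\}\subseteq S^d\subseteq\rr^{d+1}$, namely the vertical projection $\pi\colon H^+\to B^d$, $\pi(x_1,\dots,x_{d+1})=(x_1,\dots,x_d)$. This is a homeomorphism, with inverse $y\mapsto(y,\sqrt{1-\|y\|^2})$, and it restricts to the identity on the equator $\{x_{d+1}=0\}$, which we identify with $\partial B^d=S^{d-1}$; in particular $\pi(-x)=-\pi(x)$ for $x$ on the equator. Write $H^-=-H^+$ for the lower hemisphere, so that $S^d=H^+\cup H^-$ and $H^+\cap H^-$ is the equator.

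Next, given $f\colon B^d\to\rr^d$ continuous and antipodal on $\partial B^d$, define $F\colon S^d\to\rr^d$ by $F(x)=f(\pi(x))$ for $x\in H^+$ and $F(x)=-f(\pi(-x))$ for $x\in H^-$. On the equator the two formulas read $f(\pi(x))$ and $-f(\pi(-x))=-f(-\pi(x))$, and these agree precisely because $\pi(x)\in S^{d-1}=\partial B^d$ and $f$ is antipodal there; hence $F$ is well defined, and it is continuous by the pasting lemma. By construction $F(-x)=-F(x)$ for every $x\in S^d$. Applying the classical Borsuk--Ulam theorem to $F$ produces $x^\ast\in S^d$ with $F(x^\ast)=0$. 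If $x^\ast\in H^+$ then $f(\pi(x^\ast))=0$; if $x^\ast\in H^-$ then $f(\pi(-x^\ast))=0$ with $\pi(-x^\ast)\in B^d$. In either case $f$ has a zero.

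I expect no genuine obstacle: the statement is equivalent to the usual Borsuk--Ulam theorem, and the reduction above is the elementary way to see it. The only points needing (trivial) care are the consistency check on the equator --- the single place where the hypothesis ``antipodal on the boundary'' is used --- and the pasting-lemma continuity argument. For orientation, when $d=1$ the argument degenerates to the intermediate value theorem applied to $f$ on $[-1,1]$ with $f(1)=-f(-1)$. If one prefers not to invoke the sphere form, an alternative is the contrapositive: were $f$ zero-free, then $f/\|f\|\colon B^d\to S^{d-1}$ would restrict to an odd self-map of $S^{d-1}$ that extends over the contractible ball $B^d$, hence is null-homotopic and has degree $0$, contradicting Borsuk's theorem that odd self-maps of $S^{d-1}$ have odd degree.
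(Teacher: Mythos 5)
Your proof is correct. The paper states this ball version without proof, treating it as a standard reformulation of the Borsuk--Ulam theorem, and your hemisphere-gluing reduction (together with the equator consistency check using antipodality of $f$ on $\partial B^d$) is exactly the standard argument one would supply; the degree-theoretic alternative you sketch is also valid.
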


\begin{proof}[Proof of \cref{thm:smooth_star}]
We first discuss a way to represent the homothetic copies of the cookie cutter $C$ in $\R^d$. As $C$ is star-shaped, there is a point $p$ such that for every point $x\in C$ the segment $px$ is in $C$.  Given a point $c \in \rr^d$ and a scaling factor $s > 0$, we define the homothetic copy $C(c,s) = s(C-p)+c$

Given a point $c \in \rr^d$, for $s_1 \le s_2$ we have $C(c,s_1)\subseteq C(c,s_2)$.  Therefore, 
$\mu_{0}(C(c,s_1))\leq \mu_{0}(C(c,s_2))$.  The values of $s$ for which $\mu_{0}(C(c,s))=\mu_{0}(\R^d)/2$ form an interval with midpoint $s(c)$.

Consider the ball $B^d$.  For each $v \in B^d$, we will define a set $C(v)$.  For $\|v\|<1/2$, $C(v)$ will be a set of the form $C(c,s)$ for some $c \in \rr^d$ and $s>0$.  For $\|v\| \ge 1/2$, $C(v)$ will be a half-space.  For each point $v$ such that $\|v\|<1/2$, consider
\begin{align*}
c(v) & = \left(\frac{\|v\|}{2\|v\|-1}\right)v    \\
C(v) & = C\Big(c(v),s(c(v))\Big)
\end{align*}

Note that the interior of $(1/2)B^d$ parametrizes all points of $\rr^d$ with $c(v)$, so we are parametrizing all copies of $C$ that bisect $\mu_{0}$.  Note that $2\|v\|-1 < 0$.  In other words, we translate $C$ in the direction of $-v$ some amount and then scale the set to contain half of $\mu_0$.

For every direction $u \in S^{n-1}=\partial B^d$, let $m(u)$ be the point on the intersection of $\partial C(\bar{0},1)$ and the ray starting at $\bar{0}$ in direction $u$.  Let $H(u)$ be the supporting half-space of $C(\bar{0},1)$ on $m(u)$, that is, $\partial H(u)$ is the tangent hyperplane at $m(u)$ and $H(u)$ contains $\bar{0}$.   Denote by $n(u)$ be normal vector of $H(u)$, pointing towards $\bar{0}$.  Note that $n(u) \neq u$.  For $1/2 \le \|v\| \le 1$, let $\alpha = \|v\|$ and $u = v / \|v\|$.  Let

\[
n(v) = \frac{(2-2\alpha)n(u) + (1-2\alpha)u}{\|(2-2\alpha)n(u) + (1-2\alpha)u\|}
\]

This unit vector is interpolating between $n(u)$ when $\alpha = 1/2$ and $-u$ when $\alpha =1$.  The vector $n(v)$ is well defined since $n(u) - u \neq 0$, so the denominator is never zero.  Let $C(v)$ be the half-space whose normal vector is $n(u)=n(v/\|v\|)$, containing the side in direction $n(u)$, and bisecting $\mu_{0}$.  If there is an interval of such half-spaces, we pick the one at the midpoint.  Note that for $u \in S^{n-1}$, the half-spaces $C(u)$ and $C(-u)$ share the same boundary hyperplane (orthogonal to $u$) but point to opposite sides.  The behavior of $C(v)$ is described in Figure \ref{fig:homothetic}.  Now, we can define a function

\begin{figure}
    \centering
    \includegraphics[width = \textwidth]{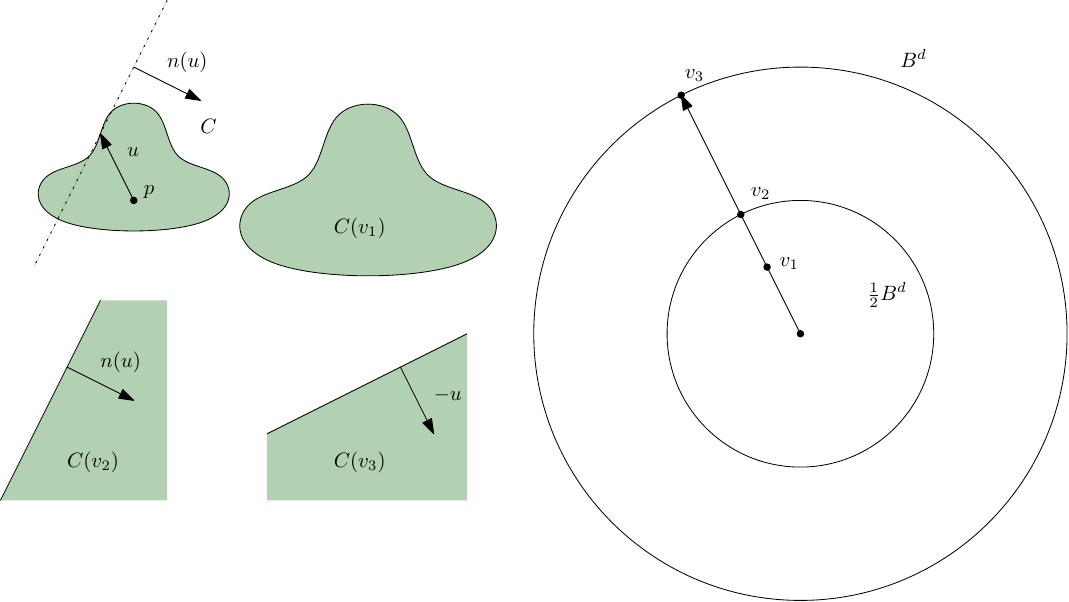}
    \caption{This figure describes how $C(v)$ changes as $v$ moves in $B^d$ towards the boundary.  Within $(1/2)B^d$, the set $C(v)$ is a homothetic translated copy of $C$.  When $v=v_2$, the magnitude of $v$ is $1/2$, and $C(v)$ is a half-space orthogonal to $n(u)$.  As we keep increasing the magnitude of $v$, we change the direction of the half-space $C(v)$ until it is orthogonal to $u$ (and points in the direction of $-u$.}
    \label{fig:homothetic}
\end{figure}

\begin{align*}
f: B^d & \to \rr^d \\
v & \mapsto (\mu_1(C(v))-\mu_1(\rr^d \setminus C(v)), \dots, \mu_d(C(v))-\mu_d(\rr^d \setminus C(v))).
\end{align*}

By construction, this map is continuous (the only delicate point is at the points $v$ such that $\|v\|=1/2$).  If $\|v\|\ge 1/2$, then $C(v)$ is a continuously moving half-space, with the topology induced by the affine oriented Grassmanian, so $f$ is continuous.  As $\|v\|$ approaches a point $v'$ such that $\|v'\|=1/2$ from the interior of $(1/2)B_d$, then the center $c$ goes to infinity in the direction of $v'$, so $\|c(v)\|, s(c(v))$ both tend to infinity.  Moreover, for any compact set $K$, $K \cap C(v)$ approaches $K \cap C(v')$ (using the Hausdorff metric).  Since the value of $\mu_{i}(C(v))$ can be checked using a sequence of increasing compact sets, the function $f$ is continuous at $v'$.

The function $f$ is also antipodal on the boundary of $B^d$.  By the Borsuk--Ulam theorem, it must have a zero, which corresponds to a homothetic copy of $C$ (possibly a half-space) that bisects all mass distributions.

\end{proof}

\section{Bisections with squares and cylinders}\label{sec:squares}

In this section we give a simple proof of a bisection theorem for cylinders.  We denote a cylinder in $\rr^d$ as the product of a $(d-1)$-dimensional ``flat ball'' in $\rr^d$ with an orthogonal segment.  During this section, let $\rho > 0$ be a fixed real number.  For the proof, we will use

\[
K = \left\{(x_1,\dots, x_{d-1},x_d) \in \rr^d: \sum_{i=1}^{d-1}x_i^2 \le 1, \ -\rho \le x_d \le \rho\right\}
\]

Changing the range of $x_d$ has no effect of the proof and the result below also follows.

\begin{theorem}\label{thm:cylinders}
    Let $d$ be a positive integer and $K$ as described above.  Let $\mu_0, \dots, \mu_{d}$ be $d+1$ mass distributions on $\rr^d$.  There exists a scaled isometric copy $K'$ of $K$ such that for all $i = 0,1,\dots, d$,
    \[
    \mu_i(K') = \frac{1}{2}\mu_i(\rr^d).
    \]
\end{theorem}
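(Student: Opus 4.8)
The plan follows the dimension-reduction template of the proof of Theorem~\ref{thm:smooth_star}, with $K$ in the role of the star-shaped cookie cutter. A scaled isometric copy $K'$ of $K$ is determined by an axis direction $u\in S^{d-1}$ (with $u$ and $-u$ giving the same copy, as $K$ is symmetric in $x_d$), the point $a\in u^{\perp}$ at which the axis meets $u^{\perp}$, a position $b\in\R$ along the axis, and a radius $r>0$: the copy is the product of the radius-$r$ ball in $u^{\perp}$ about $a$ with the half-length-$r\rho$ segment along $u$ about $b$. I would first reduce by $\mu_0$. For fixed $u,a,b$ this copy grows monotonically, as a nested family of sets, from a point to all of $\R^d$ as $r$ runs over $(0,\infty)$, so $\mu_0$ of it increases continuously from $0$ to $\mu_0(\R^d)$; let $r(u,a,b)$ be the midpoint of the (possibly degenerate) interval of radii for which the copy bisects $\mu_0$. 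This parametrizes the family $\mathcal B$ of copies of $K$ that bisect $\mu_0$, and for each fixed $u$ the slice $\mathcal B_u$ of copies with that axis direction is homeomorphic to $u^{\perp}\times\R\cong\R^d$.

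Next, fixing $u$, I would compactify $\mathcal B_u$ by adjoining its sphere at infinity, obtaining a closed ball $\overline{\mathcal B_u}$. Since any limit of copies bisecting $\mu_0$ must still capture mass while escaping to infinity, the radius blows up, and the shapes that appear in the limit are exactly half-spaces (normal along $u$ or orthogonal to $u$) and ``quadrant''-type degenerate cylinders, namely products in $u^{\perp}\times\R u$ of a lower-dimensional half-space with a half-line. Define
\[
f_u\colon\overline{\mathcal B_u}\to\rr^d,\qquad f_u(Z)=\bigl(\mu_1(Z)-\mu_1(\rr^d\setminus Z),\ \dots,\ \mu_d(Z)-\mu_d(\rr^d\setminus Z)\bigr).
\]
As in the proof of Theorem~\ref{thm:smooth_star}, $f_u$ is continuous: when a copy of $K$ degenerates, its intersection with any fixed compact set converges in the Hausdorff metric to the corresponding limit shape, and each $\mu_i(Z)$ can be evaluated along an exhausting sequence of compact sets. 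Since every $Z\in\overline{\mathcal B_u}$ bisects $\mu_0$ by construction, a zero of $f_u$ is a copy of $K$ — possibly one of the admissible limiting shapes — that simultaneously bisects $\mu_0,\dots,\mu_d$.

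The remaining and main task is to produce a zero of $f_u$, and here the argument must depart from that of Theorem~\ref{thm:smooth_star}: there every boundary shape was a half-space, half-spaces occur in complementary antipodal pairs, so $f$ was antipodal on $\partial B^d$ and the Borsuk--Ulam theorem applied verbatim; here $\partial\overline{\mathcal B_u}$ also contains the quadrant-type shapes, whose complements are not limiting shapes. If $f_u$ vanishes somewhere on $\partial\overline{\mathcal B_u}$ we are already done, that zero being an admissible limiting copy of $K$ bisecting all $\mu_i$. Otherwise $f_u/\|f_u\|$ is a map $\partial\overline{\mathcal B_u}=S^{d-1}\to S^{d-1}$, and one wants it to have odd degree. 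The key observation should be that the quadrant-type shapes arise only on a stratum of dimension $d-2$ in $\partial\overline{\mathcal B_u}=S^{d-1}$, while on the complement of a small neighbourhood of this stratum $f_u$ is genuinely antipodal (the two pieces mapping to the half-spaces orthogonal to $u$, and to the half-spaces containing $u$, are each antipode-invariant, and there complementary directions produce complementary half-spaces, by the choice of $r(u,a,b)$). Homotoping $f_u$ across that thin neighbourhood — possible since $f_u\neq0$ there — to a genuinely odd map on $S^{d-1}$, which has odd degree, and invoking that an odd-degree boundary map cannot extend to a nonvanishing map on the ball, one gets an interior zero of $f_u$: a genuine scaled isometric copy of $K$ bisecting $\mu_0,\dots,\mu_d$.

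The hard part is this last step: the precise description of the limiting shapes on $\partial\overline{\mathcal B_u}$ and the extraction of the odd-degree conclusion despite the failure of global antipodality — a Borsuk--Ulam-type input adapted to a ball with such a degenerate boundary, in the spirit of the new topological results advertised in the introduction. Everything else is bookkeeping along the lines of the proof of Theorem~\ref{thm:smooth_star}. The same scheme with the cross-sectional ball replaced by all of $u^{\perp}$ — so that $K'$ degenerates to a slab — has only half-spaces as limiting shapes and hence a genuinely antipodal boundary, so there a direct application of the Borsuk--Ulam theorem suffices; this is the variant that recovers the Sober\'on--Takahashi bisection theorem for pairs of parallel hyperplanes.
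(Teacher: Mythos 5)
There is a genuine gap, and it is already visible from the paper's introduction: you fix the axis direction $u$ once and for all and try to extract a zero of $f_u$ from a degree argument on the ball $\overline{\mathcal B_u}$. But for $d=2$ and $\rho=1$ your $K$ is a square, and with $u$ fixed your claim would say that any three mass distributions in the plane can be bisected by an \emph{axis-parallel} square. The paper's own counterexample (three point-like masses on the line $x=y$) shows this is false, so the degree of the boundary map $f_u/\|f_u\|:S^{d-1}\to S^{d-1}$ cannot always be odd. The precise place where your argument breaks is the claim that $f_u$ can be homotoped, across a thin neighbourhood of the quadrant stratum, to a genuinely odd map: being antipodal on the complement of a small neighbourhood of a codimension-one subset of $S^{d-1}$ does not control the degree, and indeed the quadrant-type limit shapes contribute to the degree in a way that is not ``small.'' Your appeal to ``$f_u\neq0$ there'' lets you homotope freely \emph{within} that neighbourhood, but it does not let you prescribe the map on it to agree with an odd extension of the outside data.

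The paper avoids this entirely by a different parametrization. Rather than fixing $u$ and letting the center roam over all of $\R^d$ (which produces quadrant-type limits on the boundary of the compactification), it restricts to cylinders whose axis passes through the origin and lets the direction $v\in S^{d-1}$ vary. Translating only along the axis and letting the $\mu_0$-bisection determine the scale gives a $d$-dimensional parameter space $S^{d-1}\times[0,1]$; the only degenerate limits are half-spaces normal to $v$, so no quadrant shapes appear. The contradiction is then a parity argument: the end $\alpha=0$ (origin-centered cylinders) satisfies $K(v)=K(-v)$, so $g_0$ is an \emph{even} map of $S^{d-1}$ with even degree, while the end $\alpha=1$ (half-spaces) gives an \emph{odd} map $g_1$ with odd degree, and the $[0,1]$-parameter is an explicit homotopy between them — impossible if $f$ is nowhere zero. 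The crucial idea you are missing is precisely this origin-axis restriction, which both keeps the dimension count right and makes the $\alpha=0$ boundary an even map rather than trying to force odd degree on a boundary sphere contaminated by quadrant strata.
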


In the theorem above, we allow for ``infinite'' scalings of $K$, which translate to half-spaces (there are other scalings possible by enlarging $K$ from a non-smooth point on its boundary, but we won't need them).  In the case $d=2$ and $\rho=1$, the set $K$ is a square.  We therefore confirm a conjecture by Sober\'on and Takahashi.

\begin{corollary}
 Let $\mu_0,\mu_1,\mu_2$ be three mass distributions on $\R^2$. Then there exists a square $C$ such that $\mu_i(C)=\frac{1}{2}\mu_i(\R^2)$ for all $i\in\{1,2,3\}$.   
\end{corollary}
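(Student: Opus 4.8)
The plan is to follow the strategy of the proof of Theorem~\ref{thm:smooth_star}: use one measure, say $\mu_0$, to cut the candidate cylinders down to a space of the right dimension, record the remaining defects of $\mu_1,\dots,\mu_d$ by an $\R^d$-valued map that is antipodal on a boundary sphere, and invoke a Borsuk--Ulam-type statement. The conceptual point is that a scaled isometric copy of $K$ is a solid infinite tube intersected with a slab orthogonal to its axis, with the tube radius and the slab width linked, and that such sets bisecting a fixed absolutely continuous measure come in natural arcs interpolating between two complementary half-spaces --- exactly the mechanism behind bisections by a single pair of parallel hyperplanes.

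Concretely, I would restrict attention to cylinders whose axis \emph{line} passes through the origin. Such a cylinder is determined by a direction $u\in S^{d-1}$ together with a window $[a,b]$, $a<b$: the axis segment is $\{tu:a\le t\le b\}$ and the radius is $r=(b-a)/(2\rho)$; call it $C_u(a,b)$, a genuine scaled isometric copy of $K$. For fixed $u$, the function $(a,b)\mapsto\mu_0(C_u(a,b))$ is monotone decreasing in $a$ and increasing in $b$, since each such move enlarges the set; hence the cylinders with $\mu_0(C_u(a,b))=\tfrac12\mu_0(\R^d)$ form a connected arc $P_u$, degenerating as $a\to-\infty$ to the half-space $\{x:\langle x,u\rangle\le M_u\}$ and as $b\to+\infty$ to its complement $\{x:\langle x,u\rangle\ge M_u\}$, where $M_u$ is the median level of $\mu_0$ in direction $u$ (absolute continuity makes the median unique and its hyperplane $\mu_i$-null for every $i$). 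As a set of cylinders, $P_{-u}$ is literally $P_u$ with its two ends interchanged. Choosing, continuously in $u$, a parametrization $P_u\cong[0,1]$ with $0$ at the end $\{\langle x,u\rangle\le M_u\}$, and gluing along the relation $(u,t)\sim(-u,1-t)$ --- which is automatically consistent by the previous sentence --- the whole family of $\mu_0$-bisecting candidates becomes
\[
X\;=\;\bigl(S^{d-1}\times[0,1]\bigr)\big/\bigl((u,t)\sim(-u,1-t)\bigr),
\]
a manifold with $\partial X=S^{d-1}$ (the $\mu_0$-bisecting half-spaces), namely the twisted $I$-bundle over $\R P^{d-1}$.

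I would then set $g\colon X\to\R^d$, $g(C)=\bigl(\mu_1(C)-\tfrac12\mu_1(\R^d),\dots,\mu_d(C)-\tfrac12\mu_d(\R^d)\bigr)$, where $C$ is the cylinder (or half-space) coded by the point. This is well defined because $C$ depends only on the point of $X$; continuity is clear except at the half-space locus, which is handled as in the proof of Theorem~\ref{thm:smooth_star} by checking the relevant masses against an exhaustion of $\R^d$ by compact sets (and by taking midpoints of degenerate intervals, as there). On $\partial X$ the antipode of the point coding $\{\langle x,u\rangle\le M_u\}$ codes the complementary half-space, and since each $\mu_i$ is absolutely continuous, $g$ takes opposite values on complementary half-spaces; thus $g$ is antipodal on $\partial X$. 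A zero of $g$ is then a cylinder bisecting $\mu_1,\dots,\mu_d$ and, by construction, also $\mu_0$, which proves the theorem; the corollary is the case $d=2$, $\rho=1$.

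It remains to supply the topological input, which I expect to be the main obstacle: a continuous map $X\to\R^d$ antipodal on $\partial X=S^{d-1}$ must vanish. If not, it normalizes to a map $X\to S^{d-1}$ whose restriction to $\partial X$ is antipodal, hence of odd degree by the Borsuk--Ulam theorem; but $\partial X\hookrightarrow X$ is homotopic to the double cover $S^{d-1}\to\R P^{d-1}$ onto which $X$ deformation retracts, so on $H_{d-1}(\,\cdot\,;\zz)$ the fundamental class of $\partial X$ either dies (when $d$ is odd, as $H_{d-1}(\R P^{d-1})=0$) or maps to twice a generator (when $d$ is even), forcing the degree to be even --- a contradiction. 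The other point needing care is the one flagged above, the continuity of $g$ and the bundle structure of the family $\{P_u\}$ across the half-space limits; the rest is the monotonicity-plus-intermediate-value reasoning already used for Theorem~\ref{thm:smooth_star}.
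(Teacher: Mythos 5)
Your proof is correct and follows essentially the same strategy as the paper's: fix the axis line through the origin, use $\mu_0$ to cut the family of candidate cylinders down to a $d$-parameter family that degenerates to complementary half-spaces at its two ends, record the $\mu_1,\dots,\mu_d$ defects, and reach a contradiction from degree parity. The only difference is presentational: you quotient $S^{d-1}\times[0,1]$ by the free $\zz_2$-action to form the twisted $I$-bundle $X$ over $\R P^{d-1}$ and phrase the topological input as a Borsuk--Ulam statement for maps antipodal on $\partial X\cong S^{d-1}$, whereas the paper works directly with $S^{d-1}\times[0,1]$ and presents the contradiction as a homotopy between an even map $g_0$ (even degree) and an odd map $g_1$ (odd degree) --- the same double-cover parity fact in a slightly different package.
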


Of course, if we change the value of $\rho$, the corollary above works as well for rectangles of fixed aspect ratio.

\begin{proof}[Proof of \cref{thm:cylinders}]
    We prove a slightly stronger statement.  If we denote the $x_d$-axis as the direction of $K$, we naturally induce a direction line for any scaled isometric copy $K'$ of $K$.  The copy we search for will have its direction line going through the origin.

    For any direction $v \in S^{d-1}$, let $K(v)$ be the isometric copy of $v$ with direction line $\{\alpha v: \alpha \in \rr\}$.  Note that $K(v)=K(-v)$.  We now construct a map $f:S^{d-1} \times [0,1] \to \rr^d$.  We first define it on $S^{d-1}\times [0,1)$.

    Given $(v,\alpha) \in S^{d-1}\times [0,1)$ and $\beta>0$, consider the sets of the form
    \[
        K'(v,\alpha, \beta) = \left( \frac{\alpha}{1-\alpha}\right)v + \beta K(v).
    \]    
    The set of values $\beta$ such that $K(v,\alpha,\beta)$ contains exactly half of $\mu_{0}$ is an interval, so we can pick $\beta$ to be the midpoint of said interval to define a set $K'(v,\alpha)$.  For a fixed $v$, as $\alpha \to 1$ we have $\beta \to \infty$.  Let $H(v)$ be the translate of the half-space $\{x: \langle x,v\rangle \ge 0\}$ that contains exactly half of $\mu_0$ (as usual, if there is a range we pick the middle half-space).  For any compact set $R$, we have that $R\cap K'(v,\alpha) \to R \cap  H(v)$ as $\alpha \to 1$, under the Hausdorff metric.  For $\alpha =1$, we define $K'(v,\alpha) = H(v)$.  Finally, the map we want is
    \begin{align*}
        f: S^{d-1}\times [0,1] & \to \rr^d \\
        (v,\alpha) & \mapsto \left( \mu_1(K'(v,\alpha)) - \frac{1}{2}\mu_1(\rr^d),\dots, \mu_d(K'(v,\alpha)) - \frac{1}{2}\mu_d(\rr^d)\right)
    \end{align*}

    By construction, $f$ is continuous.  If $f(v,\alpha) = 0$, then $K'(v,\alpha)$ is the scaled isometric copy of $K$ we were looking for.  Let us assume that the map $f$ has no zeros and search for a contradiction.  We can do a standard dimension reduction argument and define
    \begin{align*}
        g: S^{d-1}\times [0,1] & \to S^{d-1} \\
        g(v,\alpha)& = \frac{f(v,\alpha)}{\|f(v,\alpha)\|}.
    \end{align*}

    Let $g_{\alpha}: S^{d-1}\to S^{d-1}$ be defined by $g_{\alpha}(v) = g(v,\alpha)$.  The map $g$ is an explicit homotpy between $g_0$ and $g_1$.  The map $g_0$ is even, since $K(v) = K(-v)$ implies $K'(v,0) = K'(-v,0)$ and therefore $g_0(-v) = g_0(v)$.  The map $g_1$ is odd, since $H(v)$ and $H(-v)$ are half-spaces with the same boundary hyperplane but different orientation, so $g_1(-v) = - g_1(v)$.  This means that the degree of $g_0$ is even, while the degree of $g_1$ is odd, contradicting the fact that there is a homotopy between them.  Therefore, the map $f$ must have a zero.
\end{proof}

\section{Bisections with hypercubes}\label{sec:hypercubes}
In this section we prove Theorem \ref{thm:hypercubes}. Using the symmetries of hypercubes we can show something slightly stronger: we will show that we can always find a bisecting hypercube which is either centered at the origin or for which the line through the origin and the center of the hypercube is orthogonal to one of its facets.

The main topological result, which our proof is based on is the following Borsuk-Ulam type theorem for Stiefel manifolds due to Chan, Chen, Frick and Hull \cite{FrickAndFriends} (an alternative proof can be found in \cite{Manta2024} or deduced from Fadell and Husseini's classic paper on their index \cite{Fadell:1988tm}).  We denote by $V_{d,k}$ the Stiefel manifold of all orthonormal $k$-frames in $\rr^d$.  We consider $\zz_2 = \{+1,-1\}$ with multiplication, and denote by $\varepsilon_j \in (\zz_2)^k$ the element that has $-1$ in the $j$-th coordinate and $+1$ elsewhere.

\begin{theorem}[\cite{FrickAndFriends}, Thm. 1.1]\label{Thm:FrickAndFriends}
    Let $1 \le k \le d$ be integers.  Every $(\zz_2)^k$-equivariant map 
    \[V_{d,k}\rightarrow\R^{d-1}\oplus\R^{d-2}\oplus\cdots\oplus\R^{d-k}\]
    has a zero.
    Here $\varepsilon_j$ acts non-trivially precisely on the $j$th factor $\R^{d-j}$ and by $(x_1,\ldots,x_j,\ldots,x_d)\mapsto (x_1,\ldots,-x_j,\ldots,x_d)$ on $V_{d,k}$.
\end{theorem}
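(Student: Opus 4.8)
The plan is to argue by contradiction using the Fadell--Husseini ideal-valued index with $\zz_2$-coefficients. Write $G=(\zz_2)^k$, so that $H^*(BG;\zz_2)=\zz_2[t_1,\dots,t_k]$ with $\deg t_j=1$, and let $W=\rr^{d-1}\oplus\cdots\oplus\rr^{d-k}$ denote the target representation, where $\varepsilon_j$ acts by $-1$ on the $j$-th summand and trivially on the others. If some $G$-equivariant $f\colon V_{d,k}\to W$ had no zero, then $f/\|f\|$ would be a $G$-map $V_{d,k}\to S(W)$, and monotonicity of the index would give $\mathrm{Ind}_G(S(W))\subseteq\mathrm{Ind}_G(V_{d,k})$ as ideals of $\zz_2[t_1,\dots,t_k]$. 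I would reach a contradiction by exhibiting a class in $\mathrm{Ind}_G(S(W))$ that is not in $\mathrm{Ind}_G(V_{d,k})$. The Gysin sequence of the sphere bundle $S(W)\times_G EG\to BG$ shows that the equivariant mod-$2$ Euler class $e_G(W)$ lies in $\mathrm{Ind}_G(S(W))$; since the $j$-th summand is $d-j$ copies of the sign representation of $\varepsilon_j$ (each with Euler class $t_j$), its Euler class is $t_j^{d-j}$, and hence $e_G(W)=\prod_{j=1}^k t_j^{d-j}$.

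Everything then reduces to the claim that the image of $\prod_{j=1}^k t_j^{d-j}$ in $H^*_G(V_{d,k};\zz_2)$ is nonzero. I would prove this by induction on $k$ (for all $d\ge k$ at once). For $k=1$ the Borel construction is $\rr P^{d-1}$ and the class $t_1^{d-1}$ is nonzero in $\zz_2[t_1]/(t_1^d)$. For the inductive step, use the fibration $V_{d,k}\to V_{d,k-1}$ that forgets the last frame vector; its fiber over $(v_1,\dots,v_{k-1})$ is the unit sphere $S^{d-k}$ of $\mathrm{span}(v_1,\dots,v_{k-1})^{\perp}$, and it is equivariant for $G=(\zz_2)^{k-1}\times\langle\varepsilon_k\rangle$, with $\varepsilon_k$ acting antipodally on the fiber and trivially on the base. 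Passing to Borel constructions, $(V_{d,k})_{hG}$ becomes the sphere bundle of the rank-$(d-k+1)$ vector bundle $\xi=\gamma^{\perp}\otimes\ell$ over $(V_{d,k-1})_{h(\zz_2)^{k-1}}\times B\langle\varepsilon_k\rangle$, where $\gamma^{\perp}$ is the orthogonal complement of the tautological $(k-1)$-frame bundle and $\ell$ is the sign line bundle of $\varepsilon_k$ (the twist by $\ell$ records that $\varepsilon_k$ flips the last vector). From the standard formula $w_m(E\otimes\ell)=\sum_{i=0}^m w_i(E)\,t_k^{m-i}$ with $m=d-k+1$, one reads off that $e(\xi)$ is monic of degree $d-k+1$ in $t_k$, i.e.\ $e(\xi)=t_k^{d-k+1}+(\text{terms of lower }t_k\text{-degree})$.

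Because $e(\xi)$ is monic in $t_k$, it is not a zero divisor, so the Gysin sequence collapses and yields a ring isomorphism $H^*_G(V_{d,k};\zz_2)\cong H^*_{(\zz_2)^{k-1}}(V_{d,k-1};\zz_2)[t_k]/(e(\xi))$, which as a module over $H^*_{(\zz_2)^{k-1}}(V_{d,k-1};\zz_2)$ is free with basis $1,t_k,\dots,t_k^{d-k}$ (division by the monic polynomial $e(\xi)$). Now factor $\prod_{j=1}^k t_j^{d-j}=\big(\prod_{j=1}^{k-1}t_j^{d-j}\big)\cdot t_k^{d-k}$: the first factor lies in $H^*_{(\zz_2)^{k-1}}(V_{d,k-1};\zz_2)$ and is nonzero by the induction hypothesis, and $t_k^{d-k}$ is one of the free basis elements, so the product is nonzero. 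This proves the claim, so $e_G(W)=\prod_j t_j^{d-j}\notin\mathrm{Ind}_G(V_{d,k})$ although $e_G(W)\in\mathrm{Ind}_G(S(W))$, contradicting the inclusion forced by the hypothetical equivariant map. Hence $f$ must have a zero.

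The main obstacle is the inductive computation in the last two paragraphs: identifying the Borel construction of $V_{d,k}\to V_{d,k-1}$ precisely as the sphere bundle of $\gamma^{\perp}\otimes\ell$ and, crucially, checking that its mod-$2$ Euler class is monic in $t_k$ with leading coefficient $1$ — this monicity is exactly what makes the Gysin sequence degenerate and supplies the free basis that propagates the induction. Secondary inputs are the classical fact that $H^*(V_{d,k};\zz_2)$ is the exterior algebra on generators in degrees $d-k,\dots,d-1$ (so the spectral-sequence bookkeeping behaves as expected) and the standard properties of the Fadell--Husseini index (monotonicity, and the Gysin computation for representation spheres); a reader who prefers to avoid the index formalism can instead phrase the contradiction directly: an equivariant map $V_{d,k}\to S(W)$ would force $e_G(W)$ to vanish in $H^*_G(V_{d,k};\zz_2)$, contradicting the nonvanishing claim just proved.
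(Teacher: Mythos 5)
This statement is imported by the paper from Chan--Chen--Frick--Hull without proof; the authors merely remark that it can also be deduced from Fadell--Husseini's index paper. Your argument is exactly that deduction, carried out correctly and in full, so there is no in-paper proof to compare against --- but your route does differ from the cited source, whose argument is a more hands-on reduction avoiding the index formalism. The index computation you give is sound at every step I checked: the contradiction scheme via monotonicity of $\mathrm{Ind}_G$ and the identity $\mathrm{Ind}_G(S(W))=(e_G(W))=\bigl(\prod_j t_j^{d-j}\bigr)$ is standard; the identification of $(V_{d,k})_{hG}$ as the sphere bundle of $\gamma^{\perp}\otimes\ell$ over $(V_{d,k-1})_{h(\zz_2)^{k-1}}\times B\langle\varepsilon_k\rangle$ is correct (the $(\zz_2)^{k-1}$-action preserves $\gamma^\perp$ and $\varepsilon_k$ acts fiberwise by $-1$); the splitting-principle formula gives $w_{d-k+1}(\gamma^\perp\otimes\ell)$ monic in $t_k$, so cup product with it is injective on the polynomial ring $H^*_{(\zz_2)^{k-1}}(V_{d,k-1})[t_k]$, the Gysin sequence splits, and division by a monic polynomial yields the free basis $1,t_k,\ldots,t_k^{d-k}$ that makes the induction close; the base case $\R P^{d-1}$ is right, and the edge case $k=d$ (fiber $S^0$, $t_d^0=1$) causes no trouble with $\zz_2$ coefficients. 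What your approach buys is a short, self-contained cohomological proof; what the cited proof buys is elementarity (no equivariant cohomology). Two cosmetic points only: the Gysin sequence does not "collapse" but rather splits into short exact sequences because $\cup\, e(\xi)$ is injective, and $\gamma^\perp$ in your formula should be read as the bundle on the Borel construction induced by the equivariant bundle on $V_{d,k-1}$, pulled back to the product --- both are clear from context.
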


For the case $k=d$ above, we consider $\rr^0 = \{0\}$.  Consider the sphere $S^d$ with its standard embedding in $\R^{d+1}$ and let $N$ be its (fixed) north pole (that is, the point with coordinates $(0,\ldots,0,1)$ in the standard embedding). We call the antipodal point of the north pole the south pole. We say that a $(k+1)$-frame of orthogonal unit vectors $(v_0,\ldots,v_d)\in V_{d+1,d+1}$ is \emph{north-facing} if $N$ lies on the plane spanned by $v_0$ and $v_d$.
Denote by $N_{d+1}$ the space of all north-facing $(d+1)$-frames. Note that reversing $v_0$ defines a $\zz_2$-action on $N_{d+1}$. We first prove the following

\begin{theorem}\label{Thm:NorthFace}
    Every $\zz_2$-equivariant map $f:N_{d+1}\rightarrow\R^d$ has a zero.
\end{theorem}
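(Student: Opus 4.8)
The plan is to rephrase \cref{Thm:NorthFace} as the assertion that $N_{d+1}$ admits no $\zz_2$-equivariant map to $S^{d-1}$, and then to establish this by transferring enough equivariant topology from the Stiefel manifold Borsuk--Ulam theorem (\cref{Thm:FrickAndFriends}).

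First I would describe $N_{d+1}$ concretely. Since $(v_0,\dots,v_d)$ is an orthonormal basis of $\R^{d+1}$, expanding $N$ in this basis shows that the north-facing condition $N\in\operatorname{span}(v_0,v_d)$ is equivalent to $\langle N,v_i\rangle=0$ for $1\le i\le d-1$. So a north-facing frame is the same datum as an orthonormal $(d-1)$-frame $(v_1,\dots,v_{d-1})$ of the equator $N^{\perp}\cong\R^{d}$ together with an orthonormal $2$-frame $(v_0,v_d)$ of the complementary $2$-plane $W:=\operatorname{span}(v_1,\dots,v_{d-1})^{\perp}$, which automatically contains $N$. Hence $N_{d+1}$ fibers over $V_{d,d-1}$ by $(v_0,\dots,v_d)\mapsto(v_1,\dots,v_{d-1})$; the $v_0$-reversal $\zz_2$-action moves only the $(v_0,v_d)$-coordinate inside the fibers, and the fixed vector $N\in W$ provides a continuous reference direction in every fiber against which to measure $v_0$ and $v_d$.

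Next I would promote this structure to the required nonexistence statement. Assume for contradiction that $f\colon N_{d+1}\to\R^{d}$ is $\zz_2$-equivariant and never zero, and put $g:=f/\|f\|\colon N_{d+1}\to S^{d-1}$. Besides the $v_0$-reversal, reversing any of $v_d,v_1,\dots,v_{d-1}$ also preserves the north-facing condition, so $N_{d+1}$ carries a commuting $(\zz_2)^{k}$-action for a suitable $k$, the first factor being the $v_0$-reversal. Using the identification of $(v_1,\dots,v_{d-1})$ with a point of $V_{d,d-1}$ and the reference direction $N\in W$, I would relate $N_{d+1}$ equivariantly to the Stiefel manifold $V_{d+1,k}$ with the $(\zz_2)^{k}$-action of \cref{Thm:FrickAndFriends}, and transport $g$ to a $(\zz_2)^{k}$-equivariant map $V_{d+1,k}\to\R^{d}\oplus\R^{d-1}\oplus\dots\oplus\R^{d+1-k}$ whose first block is a re-indexing of $g$ (hence nowhere zero). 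The remaining blocks I would fill in with standard auxiliary equivariant coordinates on the Stiefel manifold, arranged so that the combined map vanishes only where $g$ does, or on a lower Stiefel stratum to which \cref{Thm:FrickAndFriends} applies by induction on $k$. A map that is everywhere nonzero contradicts \cref{Thm:FrickAndFriends}, so $f$ must have a zero.

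I expect the main obstacle to be the equivariant bookkeeping in the last step: selecting the correct $k$ and the correct Stiefel model, matching the $j$-th copy of $\zz_2$ on the frame with the summand $\R^{d+1-j}$ of the target, and checking that appending the auxiliary coordinates to $g$ keeps the map $(\zz_2)^{k}$-equivariant and introduces no common zero outside the stratum reached by induction. A secondary, routine matter is continuity of all of these maps at the degenerate north-facing frames where $v_0$ or $v_d$ becomes parallel to $N$ and the reference direction in $W$ degenerates; this is handled exactly as the boundary behavior of $C(v)$ in the proof of \cref{thm:smooth_star}.
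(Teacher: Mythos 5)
Your overall strategy is the same as the paper's: embed $N_{d+1}$ into a Stiefel manifold, extend the given $\zz_2$-map to a $(\zz_2)^k$-equivariant map to the block target of \cref{Thm:FrickAndFriends}, and arrange that the extension vanishes only on $N_{d+1}$, where the first block is (essentially) $f$. The observation that $N_{d+1}$ is cut out of a Stiefel manifold by the orthogonality conditions $\langle N,v_i\rangle=0$ for $1\le i\le d-1$, and that it fibers over $V_{d,d-1}$, is correct and a good guide. However, the step you flag as ``the main obstacle'' --- choosing the Stiefel model, the auxiliary equivariant coordinates, and checking that they introduce no spurious common zeros --- is precisely where the entire content of the proof lives, and your proposal does not carry it out. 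As stated, the proposal is a plan rather than a proof.

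Concretely, what the paper does at this point: take $k=d+1$, so the ambient manifold is $V_{d+1,d+1}$ and the target of \cref{Thm:FrickAndFriends} is $\R^{d}\oplus\R^{d-1}\oplus\dots\oplus\R^{0}$. The first block $\R^{d}$ carries $f$, damped smoothly to $0$ outside a small tubular neighborhood of $N_{d+1}$ (this is done by rotating a nearby frame into a genuine north-facing frame and scaling $f$ of that frame by a factor that is $1$ on $N_{d+1}$ and $0$ outside the neighborhood). Each remaining nontrivial block is filled with the signed distance from $N$ to the hyperplane spanned by all but one of the $v_j$, the sign being determined by the omitted vector; this scalar flips sign exactly under the corresponding $\varepsilon_j$ and is placed in the first coordinate of its block, zeros elsewhere. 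The intersection of the zero loci of all these auxiliary coordinates is exactly $\operatorname{span}(v_0,v_d)\ni N$, i.e.\ $N_{d+1}$, where the first block equals $f$. This is the ``no common zero outside the stratum'' property you asked for, obtained without any induction on $k$ or passage to a lower Stiefel stratum; the theorem is used once, at full strength. So your plan is compatible with the paper's proof, but you should replace the vague ``standard auxiliary equivariant coordinates'' and the proposed induction with this explicit construction (and note that pre-normalizing $f$ to $g=f/\|f\|$ is unnecessary and in fact gets in the way of damping the first block to zero off $N_{d+1}$).
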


\begin{proof}
    We extend $f$ to a $(\zz_2)^{d+1}$-equivariant map
    \[g:V_{d+1,d+1}\rightarrow\R^{d}\oplus\R^{d-1}\oplus\cdots\oplus\R^{0}\]
    with the property that $p\in N_{d+1}$ gets mapped to $(f(p),0,\ldots,0)$ and all the points that get mapped to values of the form $(x,0,\ldots,0)$ are in $N_{d+1}$.
    The statement then follows from \Cref{Thm:FrickAndFriends}.

    We define $g$ by specifying the value of $x_j \in \rr^{d+1-j}$ for all $j$ and declaring $g(v_0,\dots, v_{d}) = (x_0,\dots, x_{d})$. To define $x_1$, we just smooth out $f$ towards $0$ in a sufficiently small neighborhood of $N_{d+1}$ and set $0$ everywhere else.  A simple way of doing this is choosing some $0<\varepsilon<1$.  Given $p=(v_0, \dots, v_d) \in V_{d+1,d+1}$ such that $N \not\in \operatorname{span}(v_0, v_d):=H$, denote by $q$ the closest point to $N$ in $H \cap S^{d}$.  If $\operatorname{dist}(q,N) = \tau \le \varepsilon$,  We can take the rotation in $\operatorname{span}(q,N)$ that takes $q$ to $N$ and extend it to $S^{d}$.  Let $p'=(v'_0, \dots, v'_d)$ be the image of $(v_0, \dots, v_d)$ under this rotation, which is in $N_{d+1}$.  Finally, we declare $x_1 = (1-\tau/\varepsilon)f(p')$.
    
    For all the other non-trivial coordinates, i.e., $x_i$ for $2 \le i \le d$, consider the hyperplane $h_i$ spanned by $\{v_0,\dots,v_{d}\}\setminus \{v_i\}$. The direction of $v_i$ defines a positive side of this hyperplane $h_i$. Let $d(h_i,N)$ denote the distance between $h_i$ and $N$. We declare
    \[
        x_i = \begin{cases}
            \Big( d(h_i,N), 0,\dots, 0\Big) \in \rr^{d+1-i} & \mbox{ if $N$ is on the positive side of $h_i$} \\
            \Big( -d(h_i,N), 0,\dots, 0\Big) \in \rr^{d+1-i} & \mbox{ otherwise}
        \end{cases}
    \]

    The function $g$ defined above is continuous and $\zz^{d+1}$-equivariant.  Let us analyze the zeros of $g$.  Note that $\bigcap_{i=2}^{d}h_i$ is the span of $v_0$ and $v_d$.  Therefore, a zero of $g$ must be in $N_{d+1}$.  In this case, the first entry of $g$ is $f(p)$, and it must be zero.
\end{proof}

We are now ready to prove Theorem \ref{thm:hypercubes}

\hypercubes*

\begin{proof}
Embed $\R^d$ into $\rr^{d+1}$ by mapping $x \mapsto (x,1)$.  We can identify $\rr^d$ with the northern hemisphere of $S^d$ using central projection from the center of $S^d$. Consider a north-facing frame $\bar{p}=(v_0,\ldots,v_d)\in N_{d+1}$.  Recall that the action of $\zz_2$ on $N_{d+1}$ is such that $-\bar{p} = (-v_0, v_1, \dots, v_d)$.  Under central projection, $v_0$ defines a point $p_0$ in $\R^d$ or on the sphere at infinity.  Let $\ell$ be the line $\operatorname{span}(v_0, v_d)\cap \rr^d$.  The vectors $v_1, \dots, v_{d-1}$ are orthogonal to $N$, so they represent an orthonormal frame in $\rr^{d}$.  Let $v'$ be an unit vector on $\ell$ (the ambiguity here won't affect the construction).  Consider the frame in $\rr^d$ formed by $v_1, \dots, v_{d-1}, v'$.  If $\langle v_0, N \rangle > 0$, consider $C(\bar{p})$ the hypercube centered at $p_0$ whose facets are orthogonal to the frame $v_1, \dots, v_{d-1}, v'$.  We scale $C(\bar{p})$ so that it contains exactly half of $\mu_0$ (as before, if there is a range of scalings that satisfy this condition, we pick the middle one).  If $v_0$ becomes orthogonal to $N$, then we can consider $p_0$ as a point at infinity in the direction of $v_0$.  In this case, we make $C(\bar{p})$ the half-space that contains exactly half of $\mu_0$, orthogonal to $v_0$, which contains the side on the direction of $v_0$.

Now we define a function
\begin{align*}
    f: N_{d+1} & \to \rr^d \\
    f_i(\bar{p}) & = \begin{cases}
  \mu_i(C(\bar{p}))-\mu_i(\R^d\setminus C(\bar{p}))  & \mbox{if }\langle v_0, N \rangle \ge 0 \\
  \mu_i(\R^d\setminus C(-\bar{p}))-\mu_i(C(-\bar{p})) & \mbox{if } \langle v_0, N \rangle \le 0.
  \end{cases}
\end{align*}

Note that the two cases agree when $\langle v_0, N \rangle = 0$, as $C(\bar{p})$ and $C(-\bar{p})$ are complementary half-spaces.  The function is continuous, so by \Cref{Thm:NorthFace} it has a zero.  The zeros of $f$ correspond to hypercubes that bisects all mass distributions. \end{proof}

\section{Bisections with non-smooth cookie cutters}
\label{sec:non_smooth_cookies}

In this section we prove Theorem \ref{thm:non_smooth_cookies} for star-shaped cookie cutters.

\begin{theorem}\label{thm:non-smooth_star}
Let $\mu_0,\ldots,\mu_{d}$ be $d+1$ mass distributions on $\R^d$ and let $C$ be a (not necessarily smooth) star-shaped cookie cutter. Then there exists a set $C'$ that is a similar copy of $C$ or a reflection of a similar copy of $C$ such that $\mu_i(C')=\frac{1}{2}\mu_i(\R^d)$ for all $i\in\{1,\ldots,d+1\}$.
\end{theorem}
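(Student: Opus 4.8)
The plan is to mimic the proof of \cref{thm:smooth_star}, but to replace the sphere $S^d$ (equivalently, the ball $B^d$ with antipodal boundary, on which we invoked plain Borsuk--Ulam) with a larger configuration space that records the \emph{orientation} of the cookie cutter, since a non-smooth star-shaped $C$ has no canonical supporting half-space to interpolate toward at infinity. A natural candidate is $N_{d+1}$ from \cref{Thm:NorthFace}: a north-facing frame $\bar p = (v_0,\dots,v_d)$ encodes, via central projection, a star-point location $p_0 \in \rr^d$ (or a point at infinity when $v_0 \perp N$) together with an orthonormal frame $v_1,\dots,v_{d-1},v'$ (with $v'$ a unit vector on $\ell = \operatorname{span}(v_0,v_d)\cap\rr^d$) that fixes a rotation of $C$. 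The first step is therefore: given $\bar p$ with $\langle v_0, N\rangle > 0$, place the rotated-and-translated copy $C(\bar p)$ with star point at $p_0$ and axes along this frame, then scale it so that it contains exactly half of $\mu_0$ (taking the midpoint scaling factor when a whole interval works). Define $f:N_{d+1}\to\rr^d$ by $f_i(\bar p) = \mu_i(C(\bar p)) - \mu_i(\rr^d\setminus C(\bar p))$ on the northern region and by the sign-flipped expression $\mu_i(\rr^d\setminus C(-\bar p)) - \mu_i(C(-\bar p))$ on the southern region, exactly as in the proof of \cref{thm:hypercubes}. A zero of $f$ is a set $C'$ of the required form that bisects all $d+1$ masses, so by \cref{Thm:NorthFace} we are done --- \emph{provided $f$ is continuous}.

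The main work, and the main obstacle, is continuity at the equatorial locus $\langle v_0, N\rangle = 0$, i.e.\ when $p_0$ escapes to infinity. As $v_0 \to v_0^\perp$ with $v_0^\perp \perp N$, the star point $p_0$ moves off to infinity in the direction of $v_0$, and the scaling factor forced by the $\mu_0$-bisection condition tends to infinity as well. I would argue, as in the earlier proofs, that on any fixed compact set $R \subset \rr^d$ the intersection $R \cap C(\bar p)$ converges in the Hausdorff metric to $R \cap H$, where $H$ is a half-space whose bounding hyperplane has normal direction $v_0^\perp$ --- here I need the fact that $C$ is compact with nonempty interior and that the star-point recedes, so that locally (on $R$) a large scaled copy of a bounded set looks like a half-space orthogonal to the direction of recession. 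Since each $\mu_i$ is absolutely continuous with finite total mass, $\mu_i(C(\bar p))$ can be computed as a limit over an increasing exhaustion of $\rr^d$ by compact sets, and this forces $\mu_i(C(\bar p)) \to \mu_i(H)$. The two branches of the definition of $f$ agree on the equator because $C(\bar p)$ and $C(-\bar p)$ limit to complementary half-spaces with the same bounding hyperplane (normal $v_0^\perp$) but opposite orientations, so $\mu_i(C(\bar p)) - \mu_i(\rr^d\setminus C(\bar p)) = \mu_i(\rr^d\setminus C(-\bar p)) - \mu_i(C(-\bar p))$ in the limit. One subtlety to be careful about: when $C$ is only star-shaped, ``scaling so as to bisect $\mu_0$'' may leave a residual ambiguity if the family $\{C(p_0,s)\}_s$ is not strictly monotone; taking the midpoint of the bisecting interval, as in the smooth case, repairs this and keeps the selection continuous, and the limiting half-space $H$ (with the midpoint convention) varies continuously with $v_0^\perp$.

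Finally, the $\zz_2$-equivariance of $f$ with respect to the action $\bar p \mapsto (-v_0, v_1,\dots,v_d)$ on $N_{d+1}$ is built into the definition: reversing $v_0$ swaps the two cases in the definition of $f_i$, and the sign flip between the two cases is exactly $f_i(-\bar p) = -f_i(\bar p)$. (Reversing $v_0$ does not change the point $p_0$, the axis line $\ell$, or the frame, so $C(-\bar p)$ is the same cookie cutter as $C(\bar p)$, and the antipodality comes purely from the bookkeeping of which side we subtract.) This is the step that lets us conclude via \cref{Thm:NorthFace} rather than needing a bare-hands degree argument. I expect the only genuinely delicate estimate in the whole proof to be the Hausdorff convergence of large scaled copies of a non-smooth compact star-shaped set to a half-space, and I would isolate that as a short lemma before defining $f$. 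The reflection possibility in the statement enters because the frame $v_1,\dots,v_{d-1},v'$ may be positively or negatively oriented, and there is no continuous way across $N_{d+1}$ to insist on a fixed orientation of $C$; allowing a reflected copy of $C$ absorbs this, and it is harmless since the proof only ever uses $C(\bar p)$ through the measures it captures.
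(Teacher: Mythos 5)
Your proposal has a genuine gap at the step you yourself flag as ``the only genuinely delicate estimate'': the claim that a large scaled copy of a non-smooth star-shaped set, scaled from a star point that recedes to infinity, converges locally in the Hausdorff metric to a half-space. This is false. Take $C$ to be an axis-parallel square in $\R^2$ with star point at its center, and recede to infinity along a diagonal direction while scaling up; on any fixed compact set, what you see in the limit is a quarter-plane (a $90^\circ$ wedge), not a half-space. More generally, the local limit is the tangent cone of $\partial C$ at the boundary point hit by the ray from the star center toward the origin, and for a non-smooth cookie cutter this need not be a half-space. This means the map $f$ you construct need not be continuous across the equatorial locus, and the two branches (defined via $C(\bar p)$ and via $-\bar p$) need not agree there, since the two limiting sets are wedges that are not complements of one another. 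The whole antipodality-at-the-equator argument then collapses.

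This is precisely the issue the paper's proof is engineered to fix, and it is the one place where the non-smooth case requires a new idea beyond \cref{thm:smooth_star} and \cref{thm:hypercubes}. A cookie cutter is assumed to have at least one boundary point $p$ where $\partial C$ is smooth. The paper initially aligns $C$ with the frame so that $p$ lies on the ray from the star center in direction $v_d$, and then divides $S^d$ into annular regions $A_N, B_N, C_N, E$ (and their southern mirrors). In the inner ball $A_N$ the cookie cutter is used as placed. In $B_N$ the space of frames splits into two sheets $B_N^\pm$ (according to which way $v_d$ leans), and on $B_N^-$ the cookie cutter is continuously rotated about $\operatorname{span}(v_1,\dots,v_{d-2})$ so that at the outer boundary of $B_N$ the smooth point $p$ lies on the ray from the center in direction $-v_d$; this ensures that on both sheets the direction of recession eventually ``sees'' the smooth point $p$. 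Consequently, in $C_N$ the scaled copies really do degenerate to half-spaces as $v_0$ approaches the equator, and in $E$ one interpolates these half-spaces to the common limiting half-space exactly as in the smooth case. Without this rotation step, there is no continuous choice of orientation of $C$ on $N_{d+1}$ for which the equatorial limits are half-spaces, and your argument does not go through.
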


The proof combines ideas from the proofs of Theorem \ref{thm:hypercubes} and of Theorem \ref{thm:smooth_star}. We again use Theorem \ref{Thm:NorthFace} and restrict ourselves to north-facing frames with $v_0$ on the northern hemisphere, each of which will define a unique cookie cutter bisecting the last mass. Similar to the proof of Theorem \ref{thm:smooth_star} we use a neighborhood around the equator in which we rotate the (degenerate) cookie cutters to give rise to antipodal functions on the equator.

\begin{proof}
We again first discuss how we represent the relevant copies of the cookie cutter $C$. Let again $c$ denote the star center of $C$. Further denote by $p$ a point on the boundary $\partial C$ at which $\partial C$ is smooth. Recall that such a point exists by our definition of cookie cutters. Again, we will use the set $N_{d+1}$ of north-facing orthonormal $(d+1)$-frames in $\rr^{d+1}$ to parametrize copies of $C$.

We start with a simpler parametrization.  A copy of $C$ in $\R^d$ is determined by a point $v_0$ with a $d$-frame $(v_1,\ldots,v_d)$ attached to it, and a scaling factor $s$. We denote this copy by $C(v_0,v_1,\ldots,v_{d},s)$. We further choose our $d$-frame in such a way that the ray from $v_0$ in direction $v_d$ intersects the boundary of the copy $C'$ in the point $p'$ corresponding to the point $p$ where the boundary is smooth.

Consider now the sphere $S^d$ and divide it into seven parts $A_N, A_S, B_N, B_S, C_N, C_S$, and $E$ defined as follows. We first define $A_N\subseteq B'_N\subseteq C'_N$ as spherical balls centered at the north pole $N$ of radii $r_1<r_2<r_3$, where $r_3$ is smaller than the radius of the northern hemisphere. Now, let $B_N:=B'_N\setminus A_N$ and $C_N:=C'_N\setminus B'_N$. Note that $A_N$ is homeomorphic to a ball whereas $B_N$ and $C_N$ are homeomorphic to cylinders. Let $A_S$, $B_S$ and $C_S$ be the antipodal copies of $A_N$, $B_N$ and $C_N$ respectively. Finally, let $E$ be the remaining part of the sphere $S^d$ and note that $E$ is again a neighborhood of the equator. In the following we define a map $f:N_{d+1}\rightarrow \R^d$ depending on which part of $S^d$ the vector $v_0$ lies.

If $v_0$ lies in $A_N$, then we proceed as in the proof of Theorem \ref{thm:smooth_star}: the frame $(v_1,\ldots,v_d)$ defines the orientation of the cookie cutter, and we choose the unique scaling factor $s$ for which the cookie cutter $C^*:=C(v_0,v_1,\ldots,v_d,s)$ bisects the mass $\mu_{d+1}$. We again define
\[f_i(v_0,\ldots,v_d):=\mu_i(C^*)-\mu_i(\R^d\setminus C^*)\]
and $f=(f_1,\ldots,f_d)$. So far, this defines a function when $v_0$ is in $A_N$.

In order to define the function when $v_0$ is in $B_N$ we note that the intersection $A_N\cap B_N$ is homeomorphic to a $(d-1)$-dimensional sphere and that $N_{d+1}$ with $v_0$ restricted to this intersection, and thus also in $B_N$, splits into two connected components: one in which $v_d$ points to $N$, call it $B_N^+$, and one in which $v_d$ points away from $N$, call it $B_N^-$. We extend the function on both parts separately. The main idea of this is illustrated in Figure \ref{fig:non_smooth_rotation}. For $v_0\in B_N^+$ we take the same definition as for $v_0\in A_N$. For $v_0\in B_N^-$ we adapt the copies of the cookie cutter as follows: consider the (oriented) plane $\Pi$ spanned by the vectors $v_{d-1}$ and $v_d$. Using the space spanned by all other vectors as the axis of rotation, we can continuously rotate the cookie cutter in counter-clockwise direction with respect to the oriented plane $\Pi$ in such a way that at the boundary between $B_N$ and $C_N$ the cookie cutters with $v_0\in B_N^-$ are such that $p$ (the point at which $\partial C$ is smooth) lies on the ray from $c$ with direction $-v_d$. For each rotated copy we again choose the scaling factor so that the mass $\mu_{d+1}$ is bisected and define $f_i(v_0,\ldots,v_d):=\mu_i(C^*)-\mu_i(\R^d\setminus C^*)$.

\begin{figure}
\centering
\includegraphics[scale=0.6]{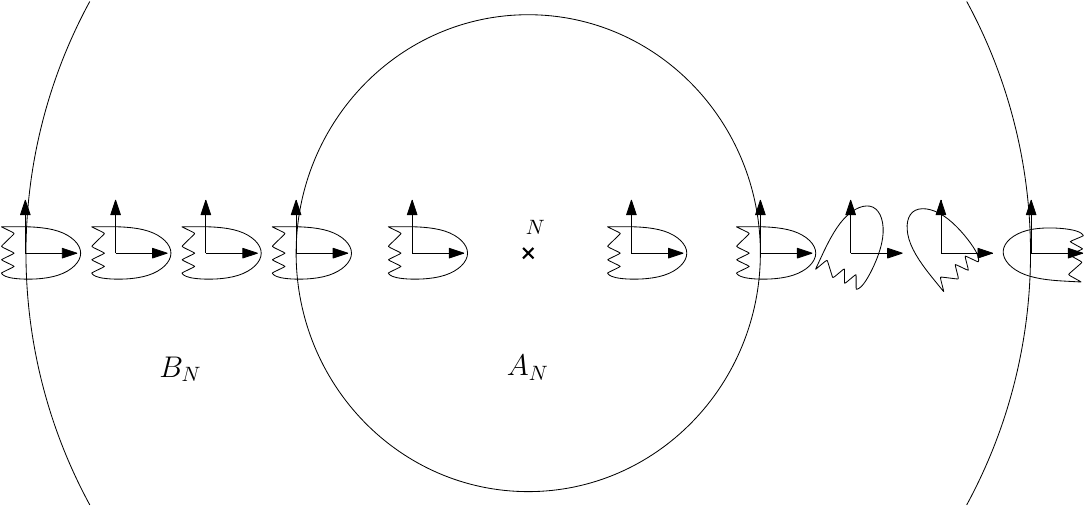}
\caption{In illustration of the rotation of the cookie cutters.}
\label{fig:non_smooth_rotation}
\end{figure}

If $v_0$ is in $C_N$, we again define $f$ in the same way, except that now in one connected component of $N_{d+1}$ restricted to $v_0\in C_N$ we take the rotated cookie cutters inherited from the rotation process in $B_N$. Considering the boundary between $C_N$ and $E$ to be the sphere at infinity, we now get that all the considered cookie cutters degenerate to hyperplanes, as the ray from $c$ to $N$ by construction passes through $p$. Finally, in $E$ we rotate these hyperplanes so that at the equator they are orthogonal to the line from $c$ to $N$, just as in the proof of Theorem \ref{thm:smooth_star}.
As before, we define $f_i(v_0,\ldots,v_d):=\mu_i(C^*)-\mu_i(\R^d\setminus C^*)$.

We have thus defined $f$ whenever $v_0$ lies in the northern hemisphere. It follows from the construction that $f$ is continuous. Further, at the equator we have $f(-v_0,v_1,\ldots,v_d)=-f(v_0,\ldots,v_d)$: the two considered hyperplanes both bisect $\mu_{d+1}$ and are thus the same, but the side that corresponds to the interior of the cookie cutter is different for both of them. Thus, the constructed function respects the required antipodality at the equator and we can symmetrically extend it to the southern hemisphere.

Combining all of the above, we get a $\zz_2$-equivariant map $f:N_{d+1}\rightarrow \R^d$, any zero of which corresponds to a simultaneously bisecting copy of the cookie cutter of $C$. The existence of a zero of $f$ now again follows from Theorem \ref{Thm:NorthFace}.
\end{proof}

\section{Generalizing to non-star-shaped cookie cutters}\label{sec:non_star}

In our arguments above we only used the fact that the cookie cutters are star-shaped in one step: arguing that there is a unique scaling that bisects the last mass. The idea of this section is to relax this condition, by showing that all the arguments still work as long as the relevant scalings are the zeroes of an odd continuous function, which they are.

To this end, we prove the following Borsuk-Ulam-type result:

\begin{theorem}\label{thm:function_bu}
Let $M$ be a manifold with a $\mathbb{Z}_2$-action ``$-$'' and assume that every $\mathbb{Z}_2$-map $M\rightarrow \mathbb{R}^d$ has a zero. Let $E=M\times [-1, 1]$ and let $f: E\rightarrow \mathbb{R}$ be a continuous map with the following properties:
\begin{itemize}
    \item[(i)] $f(x,-1)=-1$ and $f(x,1)=1$ for all $x\in M$;
    \item[(ii)] $f(-x,-t)=f(x,t)$ for all $x\in M$ and $t\in [-1,1]$;
    \item[(iii)] $f(x,t)\in (-1,1)$ for all $x\in M$ and $t\in (-1,1)$.
\end{itemize}
Let $Z=\{(x,t)\in E\mid f(x,t)=0\}$. Then every $\mathbb{Z}_2$-map $Z\rightarrow \mathbb{R}^{d}$ has a zero.
\end{theorem}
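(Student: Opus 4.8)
\medskip\noindent\emph{Proof proposal.}\quad The plan is to deduce the statement from the hypothesis on $M$ by recognizing $(E,Z)$, $\zz_2$-equivariantly, as the unreduced suspension $\Sigma M = M*S^0$ together with the zero set of an equivariant real-valued function on it. I read condition (ii) as the assertion that $f$ is $\zz_2$-equivariant for the sign action $r\mapsto -r$ on $\R$, i.e.\ $f(-x,-t)=-f(x,t)$; the $\zz_2$-action on $E=M\times[-1,1]$ is then $(x,t)\mapsto(-x,-t)$, under which $Z$ is invariant. First a reduction: if the action on $M$ has a fixed point $x_0$, then (ii) forces $f(x_0,0)=0$, so $(x_0,0)\in Z$ is a fixed point of the $\zz_2$-action on $Z$, and every $\zz_2$-map $Z\to\R^d$ (for the antipodal action on $\R^d$) must send it to $0$; thus we may assume the action on $M$ — hence on $E$, on $Z$, and on $\Sigma M$ — is free.

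Suppose, for contradiction, that $h:Z\to\R^d\setminus\{0\}$ is a $\zz_2$-map. First I would extend $h$ to a $\zz_2$-equivariant map $\tilde h:E\to\R^d$: extend $h$ from the closed set $Z$ to some $\hat h:E\to\R^d$ by Tietze's theorem, and replace it by $z\mapsto\tfrac{1}{2}(\hat h(z)-\hat h(-z))$, which is equivariant and still restricts to $h$ on $Z$. Since $Z$ is closed and, by (i), disjoint from the closed invariant set $M\times\{-1,1\}$, Urysohn's lemma followed by the averaging $g(z)\mapsto\tfrac{1}{2}(g(z)+g(-z))$ produces an invariant continuous $\rho:E\to[0,1]$ with $\rho\equiv 1$ on $Z$ and $\rho\equiv 0$ on $M\times\{-1,1\}$. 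Set
\[
F:E\to\R^{d+1},\qquad F(x,t)=\big(\rho(x,t)\,\tilde h(x,t),\ f(x,t)\big).
\]
Then $F$ is continuous, $\zz_2$-equivariant for the antipodal action on $\R^{d+1}$, and nowhere zero: if $f(x,t)\neq 0$ the last coordinate is nonzero, while if $f(x,t)=0$ then $(x,t)\in Z$, so $\rho(x,t)=1$ and the first $d$ coordinates equal $h(x,t)\neq 0$. Because $\rho$ vanishes on $M\times\{\pm 1\}$ and $f=\pm 1$ there, $F(x,1)=e_{d+1}$ and $F(x,-1)=-e_{d+1}$ for all $x$, where $e_{d+1}$ is the last standard basis vector; hence $F$ is constant on the nontrivial fibres of the suspension quotient $q:E=M\times[-1,1]\to M*S^0=\Sigma M$ and factors through it as a $\zz_2$-equivariant map $\bar F:\Sigma M\to\R^{d+1}\setminus\{0\}$, with the two cone points mapping to $\pm e_{d+1}$.

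Finally, since by hypothesis every $\zz_2$-map $M\to\R^d$ has a zero — equivalently, there is no $\zz_2$-map $M\to S^{d-1}$ — the standard suspension behaviour of the $\zz_2$-index shows there is no $\zz_2$-map $\Sigma M=M*S^0\to S^{d}$, i.e.\ every $\zz_2$-map $\Sigma M\to\R^{d+1}$ has a zero (see, e.g., \cite{Matousek2003}). This contradicts the existence of $\bar F$, and the theorem follows. The crux is the passage from $E$ to $\Sigma M$: the $\zz_2$-homotopy equivalence $E\simeq_{\zz_2}M$ is useless since it loses a dimension, whereas conditions (i) and (iii) — saying exactly that $f$ equals $1$ precisely on $M\times\{1\}$ and $-1$ precisely on $M\times\{-1\}$ — are what allow $F$ to descend past the cone points and recover it. The step I expect to require the most care is the equivariant extension-and-cutoff bookkeeping — above all arranging $\rho$ to vanish at the two ends (which uses that $Z$ is closed and bounded away from $M\times\{\pm 1\}$, by (i)) — together with correctly invoking the suspension property of the index rather than reproving it. The non-uniqueness of the scaling $t$ solving $f(x,t)=0$ — precisely the obstruction to a continuous section $M\to Z$, and the reason this theorem is needed instead of a direct reduction — is never confronted, being absorbed into working with the entire zero set at once.
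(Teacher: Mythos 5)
Your strategy runs closely parallel to the paper's own proof: both of you extend the hypothetical nowhere-zero $\zz_2$-map $g:Z\to\R^d$ equivariantly to $E$, force it to vanish at the ends $M\times\{\pm 1\}$, pair it with $f$ to get a nowhere-zero equivariant map $E\to\R^{d+1}$ hitting $\pm e_{d+1}$ at the ends, and pass to the suspension $\Sigma M$. Your extension mechanism is a genuine improvement over the paper's: Tietze extension followed by antipodal averaging, and then an invariant Urysohn cutoff $\rho$ supported away from $M\times\{\pm 1\}$ and equal to $1$ on $Z$, is cleaner and more robust than the paper's fiberwise linear interpolation between consecutive zeros of $f_x$ (whose continuity in $x$ is delicate when the number of zeros jumps). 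Your reduction to the free case is also a sensible bit of hygiene.

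However, the concluding step does not hold up. You assert that ``no $\zz_2$-map $M\to S^{d-1}$'' implies ``no $\zz_2$-map $\Sigma M\to S^d$'' as a ``standard suspension behaviour of the $\zz_2$-index,'' but this implication is not one of the standard index facts. For the geometric $\zz_2$-index the join inequality gives only the \emph{upper} bound $\operatorname{ind}(\Sigma X)\le\operatorname{ind}(X)+1$; it does \emph{not} give $\operatorname{ind}(\Sigma X)\ge\operatorname{ind}(X)+1$, which is what you need. The cohomological (Fadell--Husseini) index does satisfy an equality under joins, but it is only a one-directional obstruction: a bound on the cohomological index does \emph{not} produce a $\zz_2$-map into a sphere, so it cannot convert ``$\zz_2$-map $\Sigma M\to S^d$ exists'' into the needed ``$\zz_2$-map $M\to S^{d-1}$ exists'' under the hypothesis as stated (which controls only the geometric index of $M$). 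In effect, the implication you are invoking is essentially equivalent to the special case $f(x,t)=t$ of the theorem itself, so treating it as a black-box citation is close to circular. The paper instead argues (using condition (iii) together with its choice of extension) that the induced map $\Sigma M\to S^d$ misses the poles along the middle slice $M\times\{0\}$, so that the restriction to that slice can be equivariantly retracted into the equator $S^{d-1}$; your construction, with the cutoff $\rho$ possibly vanishing at points of $M\times\{0\}$ outside $Z$, does not obviously have this pole-avoidance property, so you cannot substitute that argument either without further work. To repair your proof you would need to either (a) arrange your extension so that the first $d$ coordinates of $F$ are nonvanishing on all of $M\times\{0\}$ (for instance by avoiding the cutoff and instead interpolating between nonzero values of $g$ more carefully), or (b) prove the suspension implication directly in this setting rather than citing it.
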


Before proving this, let us briefly explain how we can adapt the proofs above to work with this theorem. For readers interested in more details, we give a full proof of Theorem \ref{thm:smooth_cookies} at the end of the section. Recall that in the proofs of both Theorem \ref{thm:smooth_star} and Theorem \ref{thm:non-smooth_star} we described our candidate cookie cutters by the location of the star point and a scaling factor. The locations of the star point were parameterized as points on a manifold $M$, which is $S^d$ for Theorem \ref{thm:smooth_star} and $N_{d+1}$ for Theorem \ref{thm:non-smooth_star}. We can still do the same in the general setting, picking any point relative to the cookie cutter from which we scale radially. The scaling factor can be adapted to lie in the interval $[-1, 1]$. Looking at the last mass $\mu_{d+1}$ we get a map $f: M\times [-1, 1] \rightarrow \mathbb{R}$, which we can again normalize to map to $[-1, 1]$. It follows from the parametrizations in the two relevant proofs that this function $f$ satisfies the three properties required for Theorem \ref{thm:function_bu}. Further, the zeros $Z$ of this function correspond exactly to the cookie cutters bisecting $\mu_{d+1}$.

As in the proofs above we now define the $\mathbb{Z}_2$-maps $g_i: Z \rightarrow \mathbb{R}$ which for each cookie cutter $C$ in $Z$ is defined as $\mu_i(C)- \mu_i(\R^d\setminus C)$. Together these maps define a $\mathbb{Z}_2$-map $g: Z\rightarrow \R^d$, which by Theorem \ref{thm:function_bu} must have a zero. By construction, such a zero now corresponds to a cookie cutter which simultaneously bisects all masses.

It remains to prove Theorem \ref{thm:function_bu}.

\begin{proof}[Proof of Theorem \ref{thm:function_bu}]
Assume for the sake of contradiction that there is a $\mathbb{Z}_2$-map $g: Z\rightarrow \mathbb{R}^{d}$ which does not have a zero. We use this map to construct another $\mathbb{Z}_2$-map $g^*: M\rightarrow \mathbb{R}^{d}$, which is a contradiction.

To this end, we first extend $g$ to all of $E$ as follows: for some point $x\in M$, consider $f_x: [-1, 1]\rightarrow\R$, i.e., the function $f$ restricted to $x\in M$.  Formally, $f_x(t) = f(x,t)$. Let $t_1,\ldots,t_k$ be the zeros of $f_x$. For $t$ in the interval $[t_i, t_{i+1}]$ we define $h(x,t)$ as a linear interpolation between $g(t_i)$ and $g(t_{i+1})$, that is, $h(x,t):=\frac{t_{i+1}-t}{t_{i+1}-t_i}g(t_i)+\frac{t-t_i}{t_{i+1}-t_i}g(t_{i+1})$. For $t<t_1$ we define $h(x,t)$ as a linear interpolation between $0$ and $g(t_1)$ and similarly for $t>t_k$ we define $h(x,t)$ as a linear interpolation between $g(t_k)$ and $0$. Doing this for each $x\in M$ defines a continuous function $h: E\rightarrow \R^d$ which restricted to $Z$ is $g$. Further, from the antipodality of $g$ and $f$ it follows that $h(-x,-t)=-h(x,t)$.

Consider now the map $h^*: E\rightarrow \R^{d+1}$ defined by $h^*(x,t):=(h(x,t), f(x,t))$. Note that this map is is a $\mathbb{Z}_2$-map, that is, $h^*(-x,-t)=-h^*(x,t)$. Further, it has no zeros, as by construction of $h$ we have that $h(x,t)\neq 0$ if $f(x,t)=0$. Thus by normalizing we get a $\mathbb{Z}_2$-map $h': E\rightarrow S^d$. Note that by construction of $h$ and by condition (i) of $f$ we have $f(x,\pm 1)=(0,\ldots,0,\pm 1)$ for all $x\in M$ and further by condition (iii) of $f$ no other points of $E$ get mapped to $(0,\ldots,0,\pm 1)$. Thus by contracting $E$ at $t=-1$ and $t=1$ the map $h'$ extends to a $\mathbb{Z}_2$-map $h'': \Sigma E\rightarrow S^d$, where $\Sigma E$ denotes the suspension of $E$. As noted above, the suspension vertices are the only ones getting mapped to $(0,\ldots,0,\pm 1)$, so $h''$ restricted to $t=0$ is homotopic to a $\mathbb{Z}_2$-map $M\rightarrow S^{d-1}$, which is a contradiction.
\end{proof}

For the sake of illustration, we now give a full proof of Theorem \ref{thm:smooth_cookies}.

\smoothcookies*

\begin{proof}[Proof of Theorem \ref{thm:smooth_cookies}]
We first discuss a way to represent the homothetic copies of the cookie cutter $C$ in $\R^d$. Pick any point $p$ in the interior of $C$. Given a point $c \in \rr^d$ and a scaling factor $s > 0$, we define the homothetic copy $C(c,s) = s(C-p)+c$.

Assume without loss of generality that $\mu_0(\R^d)=1-\varepsilon$. For any point $c\in \R^d$ define the function $g(c,s):= \mu_0(C(c,s))-\mu_0(\R^d\setminus C(c,s))$. Note that $g(c,0)=-1+\varepsilon$ and $g(c,\infty)=1-\varepsilon$. Now let $\varphi$ be any homeomorphism $[0,\infty]\rightarrow [-1+\delta, 1-\delta]$. We thus get a function $g'(c,x):=g(c,\varphi^{-1}(x))$ with $g'(c,-1+\delta)=1-\varepsilon$ and $g'(c,1-\delta)=1-\delta$. This function can be extended to a function $g':\R^d\times[-1,1]\rightarrow[-1,1]$ with $g'(c,-1)=-1$ and $g'(c,1)=1$. Further, it follows from the construction that $g(c,x)\in (-1,1)$ for all $x\in(-1,1)$.

Consider the ball $B^d$.  For each $v \in B_d$ and $x\in[-1,1]$, we will define a set $C(v,x)$.  for $\|v\|<1/2$, $C(v,x)$ will just be the cookie cutter $C(c,\varphi^{-1}(x))$ for some $c \in \rr^d$.  For $\|v\| \ge 1/2$, $C(v)$ will be a half-space. We do this analogously to the proof of Theorem \ref{thm:smooth_star}, except that we consider all scaled cookie cutters, and not just the ones bisecting $\mu_0$. We can thus extend our function $g'$ defined above to all of $B^d$. Then, for $c\in\partial B^d$ we get that $g'(-c,-x)=g'(x,c)$. Extending this antipodally to $S^d$ we thus get the function $g'$ satisfying the properties of Theorem \ref{thm:function_bu}.

We have now parametrized cookie cutters as $S^d\times [-1,1]$. We define the map $f:S^d\times [-1,1]\rightarrow\R^d$ by $f_i(v,x):=\mu_i(C(v,x))-\mu_i(\R^d\setminus C(v,x))$. It now follows from Theorem \ref{thm:function_bu} that this map has a zero for which additionally $g'(v,x)=0$. By construction of the maps $f$ and $g'$ this corresponds to a cookie cutter that simultaneously bisects all mass distributions.
\end{proof}

\section{Conclusion}
 We have shown that for a large family of compact sets $C$ we can simultaneously bisect any $d+1$ mass distributions in $\R^d$ with similar copies of $C$. This opens a variety of follow-up questions. The first one is, whether the number of bisected mass distributions can be improved. For convex sets the answer is no: consider $d+2$ essentially point-like mass distributions where one of them, say $\mu_0$, lies inside the convex hull of the others. Now any convex set that simultaneously bisects the remaining mass distributions contains all of $\mu_0$. It is however possible, that for some sets we can bisect more than $d+1$ mass distributions.

\begin{question}
Is there a compact (maybe even star-shaped) set $C\subset\R^d$ such that any $d+2$ mass distributions in $\R^d$ can be simultaneously bisected with a similar copy of $C$? 
\end{question}

For similar copies we allow rotation, scaling and translation. It is clear that both translation and scaling are needed in general if we wish to bisect more than one mass distribution. However, rotations were only necessary for bisections with non-smooth cookie cutters. It is thus natural to wonder for which shapes scaling and translation is enough.

\begin{question}
For which sets $C\subset\R^d$ can any $d+1$ mass distributions in $\R^d$ be simultaneously bisected with a homothetic copy of $C$? 
\end{question}

Finally, there are the related algorithmic questions. As all our proofs are topological, they do not translate into any algorithm. The proof of Theorem \ref{thm:cylinders} is based on a degree argument, and such types of arguments can sometimes be adapted to give efficient algorithms, see e.g., \cite{Bereg2005,Pilz2021}.

\begin{question}
Given three point sets $P_1,P_2,P_3$ in the plane, how fast can we find a square which simultaneously bisects them?
\end{question}

Another approach that can lead to algorithmic results is based on the following idea: start by placing the point sets in so-called well-separated position and show that there is a unique bisector defined by one point of each class. Then continuously move the points to their correct positions, keeping track of the valid solutions, all of which are uniquely defined by one point of each class, by showing that they always appear or disappear in pairs, ensuring that the number of bisectors is always odd. This approach works for bisections with several lines \cite{Schnider2021} as well as with parallel hyperplanes \cite{Hubard2024}. However, already in the setting above this does not work immediately, as given three points in the plane there are generally infinitely many squares with these three points on the boundary. 

\begin{bibdiv}
\begin{biblist}

\bib{Aichholzer:2018gu}{article}{
      author={Aichholzer, Oswin},
      author={Atienza, Nieves},
      author={Díaz-Báñez, José~M},
      author={Fabila-Monroy, Ruy},
      author={Flores-Peñaloza, David},
      author={Pérez-Lantero, Pablo},
      author={Vogtenhuber, Birgit},
      author={Urrutia, Jorge},
       title={{Computing balanced islands in two colored point sets in the
  plane}},
        date={2018-07},
     journal={Information Processing Letters},
      volume={135},
       pages={28\ndash 32},
  url={https://www.sciencedirect.com/science/article/abs/pii/S0020019018300371},
}

\bib{AxelrodFreed2022}{article}{
      author={Axelrod-Freed, Ilani},
      author={Sober{\'o}n, Pablo},
       title={Bisections of mass assignments using flags of affine spaces},
        date={2024},
     journal={Discrete Comput. Geom.},
      volume={72},
       pages={550\ndash 568},
}

\bib{Akopyan:2013jt}{article}{
      author={Akopyan, Arseniy},
      author={Karasev, Roman~N.},
       title={{Cutting the Same Fraction of Several Measures}},
        date={2013-03},
     journal={Discrete \& Computational Geometry},
      volume={49},
      number={2},
       pages={402\ndash 410},
}

\bib{Blagojevic:2007ij}{article}{
      author={Blagojevi\'{c}, Pavle V.~M.},
      author={Blagojevi\'{c}, Aleksandra~Dimitrijevi\'{c}},
       title={{Using equivariant obstruction theory in combinatorial
  geometry}},
        date={2007},
     journal={Topology and its Applications},
      volume={154},
      number={14},
       pages={2635\ndash 2655},
}

\bib{Blagojevic2023}{article}{
      author={Blagojevi\'{c}, Pavle~V.M.},
      author={Crabb, Michael~C.},
       title={Many partitions of mass assignments},
        date={2023},
     journal={arXiv preprint arXiv:2303.01085},
}

\bib{Blagojevic2022}{article}{
      author={Blagojevi\'{c}, Pavle V.~M.},
      author={Dimitrijevi\'{c}~Blagojevi\'{c}, Aleksandra},
      author={Karasev, Roman},
      author={Kliem, Jonathan},
       title={More bisections by hyperplane arrangements},
        date={2022},
        ISSN={0179-5376,1432-0444},
     journal={Discrete Comput. Geom.},
      volume={67},
      number={1},
       pages={33\ndash 64},
         url={https://doi.org/10.1007/s00454-021-00337-w},
}

\bib{Bereg2005}{article}{
      author={Bereg, Sergey},
       title={Equipartitions of measures by 2-fans},
        date={2005},
     journal={Discrete \& Computational Geometry},
      volume={34},
      number={1},
       pages={87},
}

\bib{Barany:2002tk}{article}{
      author={B{\'a}r{\'a}ny, Imre},
      author={Matou\v{s}ek, Ji\v{r}\'i},
       title={{Equipartition of two measures by a 4-fan}},
        date={2002},
     journal={Discrete \& Computational Geometry},
      volume={27},
      number={3},
       pages={293\ndash 301},
}

\bib{Barba2019}{article}{
      author={Barba, Luis},
      author={Pilz, Alexander},
      author={Schnider, Patrick},
       title={Sharing a pizza: bisecting masses with two cuts},
        date={2019},
     journal={arXiv preprint arXiv:1904.02502},
}

\bib{FrickAndFriends}{misc}{
      author={Chan, Yu~Hin},
      author={Chen, Shujian},
      author={Frick, Florian},
      author={Hull, J.~Tristan},
       title={Borsuk-ulam theorems for products of spheres and stiefel
  manifolds revisited},
        date={2019},
}

\bib{Camarena2024}{article}{
      author={Camarena, Omar~Antol{\'i}n},
      author={Loperena, Jaime~Calles},
       title={{A Center Transversal Theorem for mass assignments}},
        date={2024},
     journal={arXiv preprint arXiv:2401.15095},
}

\bib{Fadell:1988tm}{article}{
      author={Fadell, Edward},
      author={Husseini, Sufian},
       title={{An ideal-valued cohomological index theory with applications to
  Borsuk—Ulam and Bourgin—Yang theorems}},
        date={1988},
     journal={Ergodic Theory and Dynamical Systems},
      volume={8},
       pages={73\ndash 85},
}

\bib{Hubard2020}{inproceedings}{
      author={Hubard, Alfredo},
      author={Karasev, Roman},
       title={Bisecting measures with hyperplane arrangements},
organization={Cambridge University Press},
        date={2020},
   booktitle={Mathematical proceedings of the cambridge philosophical society},
      volume={169},
       pages={639\ndash 647},
}

\bib{Hubard2024}{article}{
      author={Hubard, Alfredo},
      author={Sober{\'o}n, Pablo},
       title={Bisecting masses with families of parallel hyperplanes},
        date={2024},
     journal={arXiv preprint arXiv:2404.14320},
}

\bib{Karasev:2016cn}{article}{
      author={Karasev, Roman~N.},
      author={Roldán-Pensado, Edgardo},
      author={Soberón, Pablo},
       title={{Measure partitions using hyperplanes with fixed directions}},
        date={2016},
     journal={Israel journal of mathematics},
      volume={212},
      number={2},
       pages={705\ndash 728},
}

\bib{Kano2021}{article}{
      author={Kano, Mikio},
      author={Urrutia, Jorge},
       title={Discrete geometry on colored point sets in the plane---a survey},
        date={2021},
        ISSN={0911-0119,1435-5914},
     journal={Graphs Combin.},
      volume={37},
      number={1},
       pages={1\ndash 53},
         url={https://doi.org/10.1007/s00373-020-02210-8},
}

\bib{Matousek2003}{book}{
      author={Matou\v{s}ek, Ji\v{r}\'{\i}},
       title={Using the {B}orsuk-{U}lam theorem},
      series={Universitext},
   publisher={Springer-Verlag, Berlin},
        date={2003},
        ISBN={3-540-00362-2},
        note={Lectures on topological methods in combinatorics and geometry,
  Written in cooperation with Anders Bj\"{o}rner and G\"{u}nter M. Ziegler},
}

\bib{Manta2024}{article}{
      author={Manta, Michael~N.},
      author={Sober\'{o}n, Pablo},
       title={Generalizations of the {Y}ao--{Y}ao {P}artition {T}heorem and
  {C}entral {T}ransversal {T}heorems},
        date={2024},
        ISSN={0179-5376,1432-0444},
     journal={Discrete Comput. Geom.},
      volume={71},
      number={4},
       pages={1381\ndash 1402},
         url={https://doi.org/10.1007/s00454-023-00536-7},
}

\bib{Pilz2021}{article}{
      author={Pilz, Alexander},
      author={Schnider, Patrick},
       title={Bisecting three classes of lines},
        date={2021},
     journal={Computational Geometry},
      volume={98},
       pages={101775},
}

\bib{RoldanPensado2022}{article}{
      author={Rold\'{a}n-Pensado, Edgardo},
      author={Sober\'{o}n, Pablo},
       title={A survey of mass partitions},
        date={2022},
        ISSN={0273-0979},
     journal={Bull. Amer. Math. Soc. (N.S.)},
      volume={59},
      number={2},
       pages={227\ndash 267},
         url={https://doi.org/10.1090/bull/1725},
}

\bib{Schnider2019}{article}{
      author={Schnider, Patrick},
       title={Equipartitions with wedges and cones},
        date={2019},
     journal={arXiv preprint arXiv:1910.13352},
}

\bib{Schnider:2020kk}{article}{
      author={Schnider, Patrick},
       title={{Ham-Sandwich Cuts and Center Transversals in Subspaces}},
        date={2020},
     journal={Discrete \& Computational Geometry},
      volume={98},
      number={4},
       pages={623},
}

\bib{Schnider2021}{incollection}{
      author={Schnider, Patrick},
       title={The complexity of sharing a pizza},
        date={2021},
   booktitle={32nd {I}nternational {S}ymposium on {A}lgorithms and
  {C}omputation},
      series={LIPIcs. Leibniz Int. Proc. Inform.},
      volume={212},
   publisher={Schloss Dagstuhl. Leibniz-Zent. Inform., Wadern},
       pages={Art. No. 13, 15},
}

\bib{Soberon2023}{article}{
      author={Sober\'{o}n, Pablo},
      author={Takahashi, Yuki},
       title={Lifting methods in mass partition problems},
        date={2023},
        ISSN={1073-7928,1687-0247},
     journal={Int. Math. Res. Not. IMRN},
      number={16},
       pages={14103\ndash 14130},
         url={https://doi.org/10.1093/imrn/rnac224},
}

\bib{Stone:1942hu}{article}{
      author={Stone, A.~H.},
      author={Tukey, J.~W.},
       title={{Generalized ``sandwich'' theorems}},
        date={1942},
        ISSN={0012-7094},
     journal={Duke Mathematical Journal},
      volume={9},
      number={2},
       pages={356\ndash 359},
}

\bib{Steinhaus1938}{article}{
      author={Steinhaus, Hugo},
       title={A note on the ham sandwich theorem},
        date={1938},
     journal={Mathesis Polska},
      volume={9},
       pages={26\ndash 28},
}

\bib{Steinhaus49}{article}{
      author={Steinhaus, Hugo},
       title={Sur la division pragmatique},
        date={1949},
        ISSN={00129682, 14680262},
     journal={Econometrica},
      volume={17},
       pages={315\ndash 319},
         url={http://www.jstor.org/stable/1907319},
}

\bib{Uno:2009wk}{article}{
      author={Uno, Miyuki},
      author={Kawano, Tomoharu},
      author={Kano, Mikio},
       title={{Bisections of two sets of points in the plane lattice}},
        date={2009},
     journal={IEICE Transactions on Fundamentals of Electronics, Communications
  and Computer Sciences},
      volume={92},
      number={2},
       pages={502\ndash 507},
}

\end{biblist}
\end{bibdiv}

\end{document}